\def\gnk{G_{n,k}}
\def\agnk{ G(n,k)}
\def\gnkp{G_{n,k'}}
\def\agnkp{ G(n,k')}
\def\ia{I_+^\alpha}
\def\rk{\bbr^k}
\def\rkp{\bbr^{k'}}
\def\rkk{\bbr^{k'-k}}
\def\rnkp{\bbr^{n-k'}}
\def\iagr{\intl_{\agnk}}
\def\iagrp{\intl_{\agnkp}}
\def\Cal{\mathcal}
\def\P{{\Cal P}}
\def\D{{\Cal D}}
\def\F{{\Cal F}}
\def\gnk{G_{n,k}}
\def\gnkp{G_{n,k'}}
\def\ia{I_+^\alpha}
\def\pk{\P_k}
\def\bbr{{\Bbb R}}
\def\bbn{{\Bbb N}}
\def\bbc{{\Bbb C}}
\def\dist{{\hbox{\rm dist}}}
\def\const{{\hbox{\rm const}}}
\def\Cos{{\hbox{\rm Cos}}}
\def\det{{\hbox{\rm det}}}
\def\Pr{{\hbox{\rm Pr}}}
\def\gnk{G_{n,k}}
\def\gnkp{G_{n,k'}}
\def\ia{I_+^\alpha}
\def\pk{\P_k}
\def\rn{\bbr^n}
\def\rk{\bbr^k}
\def\part{\partial}
\def\intl{\int\limits}
\def\b{\beta}
\def\Gam{\Gamma}
\def\Om{\Omega}
\def\a{\alpha}
\def\om{\omega}
\def\del{\delta}
\def\vp{\varphi}
\def\vk{\varkappa}
\def\g{\gamma}
\def\gam{\gamma}
\def\Lam{\Lambda}
\def\sig{\sigma}
\def\lam{\lambda}
\def\z{\zeta}
\def\t{\tau}
\def\th{\theta}
\font\frak=eufm10
\def\fr#1{\hbox{\frak #1}}
\newtheorem{theorem}{Theorem}[section]
\newtheorem{lemma}[theorem]{Lemma}
\theoremstyle{definition}
\newtheorem{definition}[theorem]{Definition}
\theoremstyle{remark}
\newtheorem{remark}[theorem]{Remark}
\theoremstyle{corollary}
\newtheorem{proposition}[theorem]{Proposition}
\numberwithin{equation}{section}
\newcommand{\be}{\begin{equation}}
\newcommand{\ee}{\end{equation}}
\newcommand{\bea}{\begin{eqnarray}}
\newcommand{\eea}{\end{eqnarray}}
\newcommand{\Bea}{\begin{eqnarray*}}
\newcommand{\Eea}{\end{eqnarray*}}
\def\sideremark#1{\ifvmode\leavevmode\fi\vadjust{\vbox to0pt{\vss
 \hbox to 0pt{\hskip\hsize\hskip1em
\vbox{\hsize2cm\tiny\raggedright\pretolerance10000
 \noindent #1\hfill}\hss}\vbox to8pt{\vfil}\vss}}}%
\begin{document}

\title [Radon Transforms on Affine  Grassmannians]{ New Inversion Formulas for Radon Transforms on Affine  Grassmannians}

\author{Boris Rubin and Yingzhan Wang*}
\address{
Department of Mathematics, Louisiana State University, Baton Rouge,
LA, 70803 USA}
\email{borisr@math.lsu.edu}
\address{School of Mathematics and Information Science, Guangzhou University, Guangzhou 510006, China;}
\email{wyzde@gzhu.edu.cn}
\thanks{*Y. Wang was supported by NNSF of China (Grant Nos. 10471040)}

\subjclass[2010]{Primary 44A12; Secondary 47G10}



\keywords{Radon  transforms, Grassmann manifolds, Funk transform, Erd\'elyi-Kober operators.}

\begin{abstract}
We obtain new inversion formulas for the Radon transform and the corresponding dual transform acting on affine Grassmann manifolds of planes in $\rn$. The consideration is performed in full generality on continuous functions and functions belonging to  $L^p$ spaces.

\end{abstract}

\maketitle

\section{Introduction}

This article is  a continuation and generalization of our previous work  \cite {RW} devoted to integral geometry on  line bundles.  Let $\agnk$ and $\agnkp$ be a pair  of the affine Grassmann
manifolds of   $k$-dimensional  and $k'$-dimensional non-oriented  planes  in
$\rn$, respectively. We suppose that $0 < k <k'<n$. The excluded case $k=0$
 formally corresponds to points in $\rn$. Given sufficiently good functions
$f$ on $\agnk$ and $\vp$ on $\agnkp$, we consider the
following integral transforms
\be\label{op1} (Rf)(\z)  = \intl_{\t \subset \zeta}
f(\t)\, d_\z \t, \qquad (R^*\vp)(\t) = \intl_{\z \supset \t} \vp(\z)\, d_\t \z, \ee
the
integration being performed with respect to the corresponding canonical
measures. The first integral is called {\it the Radon transform}
of $f$ and denotes integration over all $k$-planes $\t$ in the
$k'$-plane $\z$. The second one is called {\it the dual Radon
transform} of $\vp$ and  integrates over all $k'$-planes
$\z$ containing  the $k$-plane $\t$.

Our goal is to find explicit inversion formulas for these transforms in possibly wide classes of functions. The  problem is not new.
Similar problems for compact Grassmann manifolds $\gnk, \,\gnkp$
 of  $k$-dimensional and $k'$-dimensional linear subspaces  of $\rn$ were studied by many authors, including  I.M. Gelfand,
 M.I. Graev, Z.Ya. Shapiro,  R.  Ro\c{s}u,  E.E. Petrov, E.L. Grinberg, F. Gonzalez, T.
 Kakehi, G. Zhang: see   \cite{GR04, Zha1}, and references therein.

 The noncompact case   is essentially more complicated.  Different approaches to the study of  operators (\ref{op1}) are known.  M.I. Graev \cite{Gra} parametrized planes in $\rn$ by
 matrices and  obtained an inversion
 formula for $R f$ in the so-called local case (when  $k'-k$ is even) by making use of differential forms and kappa operators.  Another inversion formula  for $R f$ if
 $k'-k$ is even was suggested  by F. Gonzalez and T. Kakehi  \cite{GK03}
 who used the corresponding Lie algebra language and the Fourier transform techniques. In both publications only smooth rapidly decreasing functions $f$ were considered.
 One should also mention the paper by Strichartz \cite{Str86} who
 developed  $L^2$ harmonic analysis on Grassmannian bundles.

A completely different approach to  operators (\ref{op1}) was suggested  by the first-named co-author in \cite{Ru04a}.
The key idea of \cite{Ru04a} is to use a certain analogue of the stereographic projection  to express (\ref{op1})
through the similar operators on  compact Grassmannians. The latter can be studied using the tools developed in  \cite{GR04}.
This approach enables one to obtain  inversion formulas   for both $R$ and $R^*$  in the framework of Lebesgues spaces for
arbitrary $k'-k>0$ provided that these operators are injective.

The $L^p$-theory of the  operators (\ref{op1}) is of independent interest. The boundedness of these operators  in  $L^p$ spaces with power weights was studied in \cite{Ru14}.

{\bf Aim of the Work and Main Results:} It is known \cite{H11, Ru15} that in the case $k=0$, when $Rf$ becomes the classical
Radon-John transform, the inversion of $R$ can be performed directly, without using stereographic projection.
Note also that the use of the stereographic projection makes all formulas more complicated  because of  inevitable  weight factors.
We wonder, {\it if there is a direct way to invert  the operators (\ref{op1}) under possibly minimal assumptions for functions $f$ and $\vp$}.

New results in this direction are obtained in the present paper. In particular,
we show that   inversion of  (\ref{op1}) can be reduced to consecutive inversion of certain   Radon-John transforms over lower dimensional planes
and  Funk-Radon  transforms on compact Grassmannians. These `simpler' transforms can be inverted by known tools; see, e.g.,  \cite{GR04, H11, Ru04b, Ru13b} and references therein. New subclasses of the so-called {\it quasi-radial functions}, that serve as Grassmannian generalizations of radial functions on $\rn$ and on which operators (\ref{op1}) factorize into the tensor product of known integral geometrical objects, are introduced.

  The paper is organized as follows.  Section 2 contains necessary background related to Radon-like transforms (in affine and compact    settings)   and Erd\'elyi-Kober fractional integrals. These facts are applied in
  Sections 3 and 4   to inversion of the operators (\ref{op1}).

\section{Preliminaries}

\setcounter{equation}{0}

\subsection{Notation.} Let $\agnk$  be the
affine Grassmann manifold of all non-oriented $k$-dimensional
planes  in $\rn$, $0 < k <n$. We denote by $\gnk$  the compact
Grassmann manifold
 of all $k$-dimensional linear subspaces  of $\rn$. Each plane
  $\t \in \agnk$ is  parameterized by the pair
$(\xi, u)$, where $\xi \in \gnk$ and $ u \in \xi^\perp$, the
orthogonal complement to $\xi $ in $\rn$. We denote by $ |\t|$ the
 Euclidean distance of $\t \equiv \t( \xi, u)\in \agnk$ to the origin of $\rn$.
Clearly, $|\t|=|u|$ (the Euclidean norm of $u$). The manifold
$\agnk$ will be endowed with the product measure $d\t=d\xi du$,
where $d\xi$ is the
 $O(n)$-invariant probability measure  on $\gnk$  and $du$ denotes the usual volume element on $\xi^\perp$. We use the notation $C_\mu (\agnk)$ for the  space of continuous function $f$ on $\agnk$ satisfying $f (\t) =O(|\t|^{-\mu})$, $\mu \in \bbr$. The notation $C_\mu (\rn)$ for   the  space of continuous functions on $\rn$ has a similar meaning.  We  also denote
\be\label{lab10}
L^1_\lam (G(n,k)) =\Big \{ \; f : \; \iagr \frac
{|f(\t)| \;
 d\t}{(1+|\t|)^\lam} < \infty  \; \Big \}. \ee

In the following, $S^{n-1} = \{ x \in \bbr^n: \ |x| =1 \}$ is the unit sphere
in $\bbr^n$.   For $\theta \in S^{n-1}$,
$d\theta$ stands for  the surface element on $S^{n-1}$;  $\sigma_{n-1} =  2\pi^{n/2} \big/ \Gamma (n/2)$ is the surface area of $S^{n-1}$.  We set $d_*\theta= d\theta/\sigma_{n-1}$ for  the normalized surface element on $S^{n-1}$.

For $k'>k$ and $\eta \in \gnkp$, we denote by  $G_k (\eta)$ the
Grassmann manifold of all $k$-dimensional linear subspaces of
$\eta$; $ e_1$, $\ldots$, $e_n$ are the coordinate unit
 vectors in $\rn$.  Given $0 < k<k'<n$, we
 use the following notations for the coordinate planes:
 \be\label{mo1} \rk=\bbr e_1 \oplus \cdots \oplus\bbr e_k, \qquad
 \rkp=\bbr e_1 \oplus \cdots \oplus \bbr e_{k'}, \ee
\be\label{mo2} \rkk=\bbr e_{k+ 1}\oplus \cdots \oplus \bbr
 e_{k'}, \qquad \bbr^{n-k}=\bbr e_{k+ 1}\oplus \cdots \oplus \bbr
 e_{n}.\ee
\be\label{mo3} \bbr^{n-k'}=\bbr e_{k'+ 1}\oplus \cdots \oplus \bbr
 e_{n}.\ee
 The letter $c$ stands for a constant that can be different at each
occurrence;  $[ \alpha]$ denotes the integer part of the real number $\a$. All integrals are understood as Lebesgue integrals, unless otherwise stated. We say that  an integral under consideration
 exists in the Lebesgue sense if it is finite when all functions under the sign of integration are replaced by their absolute values.

\subsection{The Radon Transforms for a Pair of Affine Grassmannians.}
 Let $\agnk $ and $ \agnkp$ be a pair of affine Grassmann manifolds of
 $k$-planes $\t$ and $k'$-planes $\z$ in $\rn$,
 respectively; $ \; 1\le k < k' \le n-1$. We write
 \be\label{lab1} \t \!\equiv  \! \t( \xi, u)\in \agnk, \quad  \quad
 \z \! \equiv \! \z(\eta, v) \in \agnkp. \ee
The Radon transform  of a function $f $ on $\agnk$ is a
function $Rf$  on $\agnkp$ defined by
\be\label{lab2} (Rf)(\z) = \intl_{\t
\subset \zeta} f(\t)\, d_\z \t.\ee
In terms of (\ref{lab1}) it means
\be\label{lab2z}
(Rf)(\eta,v)= \intl_{\xi \subset \eta} d_\eta \xi
\intl_{\xi^\perp \cap  \eta} f(\xi, v+x) \,dx,\ee
 where $d_\eta \xi$
denotes the canonical probability measure on the Grassmannian $G_k (\eta)$ of
all $k$-dimensional linear subspaces  of $\eta$. The right-hand
side of (\ref{lab2z}) gives precise meaning to the integral $\int_{\t
\subset \zeta} f(\t)\, d_\z \t$  denoting integration over all
$k$-planes $\t$ in the $k'$-plane $\z$. Assuming $g \in SO(n)$ to
be a rotation satisfying
 \[ g: \rkp \to \eta, \qquad g: e_{k'+1}
\to v/|v|,  \] and setting $f_g(\t)=f(g\t)$, one can write (\ref{lab2})
as
 \be\label{lab3} (Rf)(\eta,v)=\intl_{G_{k',k}} d\sig \intl_{\sig^\perp \cap \rkp}
  f_g (\sig, |v|e_{k'+1}+y) \, dy. \ee

The dual Radon transform  of a function $\vp(\z) \equiv \vp(\eta,
v)$ on $\agnkp$ is a function $(R^*\vp)(\t)\equiv (R^*\vp)(\xi, u)$  on $\gnk$ defined by
\bea\label{lab5} (R^*\vp)(\t)
&=& \intl_{\z \supset \t} \vp(\z)\, d_\t \z\equiv  \intl_{\eta  \supset \xi}
\vp(\eta +u) \, d_\xi \eta\\
&=&\intl_{\eta \supset \xi}
\vp(\eta,\Pr_{\eta^\perp} u) \,d_\xi \eta.\nonumber \eea Here
$\Pr_{\eta^\perp} u$ is the orthogonal projection of $u \;
(\in \xi^\perp)$ onto $\eta^\perp (\subset \xi^\perp)$, $d_\xi
\eta$ is the relevant probability measure.   In order to give (\ref{lab5}) precise meaning, we choose
a rotation $g_\xi \in SO(n)$ so that $g_\xi \rk=\xi$, and let  $K_0\subset SO(n)$ be the isotropy subgroup at
$\bbr^{k}\in \gnk$.
 Then (\ref{lab5}) means \be\label{lab6} (R^*\vp)(\t)\equiv (R^*\vp)(\xi,u)=
\intl_{K_0} \vp(g_\xi \rho \rkp +u) \, d\rho. \ee

\begin{lemma}\label{L1} \cite[Lemma 2.1]{Ru04a}   The equality
\be\label{lab7} \iagrp (Rf)(\z) \vp(\z) \,d \z=\iagr f(\t) (R^*\vp)(\t)\, d\t \ee holds provided that
 the integral in  either side is finite when $f$ and $\vp$ are replaced by $|f|$ and $|\vp|$,
respectively.
\end{lemma}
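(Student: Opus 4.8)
The plan is to recognize (\ref{lab7}) as a Fubini-type duality: both sides are integrals of $f(\t)\vp(\z)$ over the incidence set $\{(\t,\z):\t\subset\z\}$, and the only real issue is that the two orders of integration carry the same measure. First I would substitute the explicit formulas (\ref{lab2z}) and (\ref{lab5}) and unfold everything in the coordinates $\t=\t(\xi,u)$ with $\xi\in\gnk$, $u\in\xi^\perp$, and $\z=\z(\eta,v)$ with $\eta\in\gnkp$, $v\in\eta^\perp$. The containment $\t\subset\z$ forces $\xi\subset\eta$, and for such a pair one has the orthogonal splitting $\xi^\perp=\eta^\perp\oplus(\xi^\perp\cap\eta)$, where $\dim(\xi^\perp\cap\eta)=k'-k$. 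Writing $u=v+w$ with $v\in\eta^\perp$ and $w\in\xi^\perp\cap\eta$, one checks $\Pr_{\eta^\perp}u=v$ (since $w\perp\eta^\perp$) and $du=dv\,dw$.

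With this change of variables and one interchange of the order of integration (legitimate under the stated absolute-integrability hypothesis by Tonelli's theorem), the right-hand side of (\ref{lab7}) becomes
\be
\iagr f(\t)\,(R^*\vp)(\t)\,d\t=\intl_{\gnk} d\xi\intl_{\eta\supset\xi}d_\xi\eta\intl_{\eta^\perp}dv\intl_{\xi^\perp\cap\eta}dw\;f(\xi,v+w)\,\vp(\eta,v),
\ee
while the left-hand side, upon identifying the domain $\xi^\perp\cap\eta$ of the variable $x$ in (\ref{lab2z}) with that of $w$, becomes
\be
\iagrp (Rf)(\z)\,\vp(\z)\,d\z=\intl_{\gnkp} d\eta\intl_{\xi\subset\eta}d_\eta\xi\intl_{\eta^\perp}dv\intl_{\xi^\perp\cap\eta}dw\;f(\xi,v+w)\,\vp(\eta,v).
\ee
The two expressions have the same inner integrand and the same inner $(v,w)$-integrations; they differ only in the outer pair of Grassmannian integrations.

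Thus the whole matter reduces to the compact identity
\be
\intl_{\gnk} d\xi\intl_{\eta\supset\xi}\Phi(\xi,\eta)\,d_\xi\eta=\intl_{\gnkp} d\eta\intl_{\xi\subset\eta}\Phi(\xi,\eta)\,d_\eta\xi
\ee
for functions $\Phi$ on the partial flag manifold $\{(\xi,\eta)\in\gnk\times\gnkp:\xi\subset\eta\}$. This is the standard duality on compact Grassmannians: since $O(n)$ acts transitively on this flag manifold, it carries a unique invariant probability measure $d\nu$; the push-forwards of $d\nu$ under the two coordinate projections are the invariant probability measures $d\xi$ and $d\eta$, and its disintegrations along these projections are precisely the canonical probability measures $d_\xi\eta$ and $d_\eta\xi$. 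Hence both sides equal $\int\Phi\,d\nu$. Applying this with $\Phi$ the two inner $(v,w)$-integrals then equates the two displayed expressions and proves the lemma; taking $\Phi$ built from $|f|$ and $|\vp|$ simultaneously shows, via Tonelli, that finiteness on one side forces finiteness on the other, which is exactly the hypothesis needed to license the Fubini interchange above.

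I expect the substance of the argument to be light, with the main obstacle being bookkeeping: I must make sure the Lebesgue measure $du$ on $\xi^\perp$ factors as $dv\,dw$ under the $\eta$-dependent orthogonal splitting (so the factorization is carried out only after $\eta$ is fixed), and that the normalizations of $d_\eta\xi$ and $d_\xi\eta$ implicit in (\ref{lab2z}) and (\ref{lab6}) agree with the disintegrations of $d\nu$ on the flag manifold. Once these normalizations are matched, the two orders of integration coincide and (\ref{lab7}) follows.
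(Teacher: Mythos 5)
Your proof is correct: unfolding both sides in the coordinates $\t=\t(\xi,u)$, $\z=\z(\eta,v)$, factoring Lebesgue measure across the orthogonal splitting $\xi^\perp=\eta^\perp\oplus(\xi^\perp\cap\eta)$, and then reducing to the flag-manifold identity via uniqueness of the $O(n)$-invariant probability measure (with Tonelli justifying the interchanges and transferring finiteness from one side to the other) is a complete argument. Note that the paper itself offers no proof of this lemma --- it is quoted directly from \cite[Lemma 2.1]{Ru04a} --- and your argument is essentially the standard Fubini-plus-invariance proof given in that reference, so there is nothing to flag beyond the fact that the normalization bookkeeping you worried about is automatic once one observes that both product measures $d\xi\,d_\xi\eta$ and $d\eta\,d_\eta\xi$ are invariant probability measures on the same compact homogeneous space.
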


\begin{lemma} \label{L2} ${}$\hfill

{\rm (i)}If $f \in L^p (\agnk), \; 1 \le p < (n-k)/(k' - k)$,
then $(Rf)(\z)$ is finite for almost all $\z \in \agnkp$. If $ f\in C_\mu (\agnk)$, $\mu >
 k' - k$, then $(Rf)(\z)$ is finite for all $\z \in
 \agnkp$. The conditions $p < (n-k)/(k' - k)$ and $\mu>
 k' - k$ are sharp.

{\rm (ii)}The dual transform  $(R^*\vp)(\t)$ is finite a.e. on $\agnk$ for every locally integrable function $\vp$ on $\agnkp$ and represents a  locally integrable function on $\agnk$.

\end{lemma}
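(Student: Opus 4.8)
The plan is to establish the integrability claims by reducing both statements to explicit iterated integrals in the coordinate models (\ref{lab3}) and (\ref{lab6}), and then analyze the resulting integrals over Euclidean fibers using polar coordinates. The two parts are somewhat different in character: part (i) concerns the boundedness and a.e.-finiteness of the Radon transform $R$ on $L^p$ and on the weighted continuous spaces $C_\mu$, and it requires a sharpness discussion; part (ii) concerns only local integrability of the dual transform $R^*$, which should be softer because the dual transform integrates over a compact fiber (the Grassmannian $G_k(\eta)$, equivalently the compact group $K_0$) against a probability measure.

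**For part (i),** the key structural fact is formula (\ref{lab3}), which writes $(Rf)(\eta,v)$ as an integral over the compact Grassmannian $G_{k',k}$ with respect to a probability measure, followed by an integral over the $(k'-k)$-dimensional fiber $\sigma^\perp\cap\bbr^{k'}$. Since the outer measure is a probability measure, by Minkowski's integral inequality the $L^p$ (or $C_\mu$) behavior of $Rf$ is governed entirely by the inner Euclidean integral $\int_{\sigma^\perp\cap\rkp} f_g(\sigma,|v|e_{k'+1}+y)\,dy$ over a copy of $\rkk=\bbr^{k'-k}$. The natural strategy is to pass to polar coordinates on this $(k'-k)$-plane: the radial integral $\int_0^\infty r^{k'-k-1}(\cdots)\,dr$ converges precisely when the integrand decays faster than $r^{-(k'-k)}$, which for $f\in C_\mu$ means $\mu>k'-k$, giving exactly the stated continuous-space threshold. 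For the $L^p$ claim I would bound the inner integral via Hölder's inequality against the weight coming from the fiber and then integrate in the remaining variables $(\xi,u)$ using the product structure $d\t=d\xi\,du$, tracking when the combined exponent stays integrable; the Hölder balance produces the dual condition $p<(n-k)/(k'-k)$. **The sharpness assertions are the main obstacle:** to show the thresholds cannot be improved, I would exhibit explicit quasi-radial test functions (depending only on $|\t|$) for which $Rf$ diverges at the borderline exponents $\mu=k'-k$ and $p=(n-k)/(k'-k)$, converting the reduced integral into a one-dimensional radial integral of Beta-function type whose divergence at the endpoint is transparent.

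**For part (ii),** I would work from the representation (\ref{lab6}), $(R^*\vp)(\xi,u)=\int_{K_0}\vp(g_\xi\rho\rkp+u)\,d\rho$, where the integration is over the compact isotropy subgroup $K_0$ against its normalized Haar measure. Because $\vp$ is only assumed locally integrable, the natural claim is local integrability of $R^*\vp$, and the cleanest route is to invoke the duality relation of Lemma \ref{L1}: for any nonnegative compactly supported (hence bounded) test function $f$ on $\agnk$, the identity (\ref{lab7}) pairs $\int f\cdot R^*|\vp|$ with $\int R f\cdot|\vp|$, and by part (i) the Radon transform $Rf$ of such an $f$ is bounded with controlled support, so the right-hand side is finite whenever $\vp$ is integrable over the relevant bounded region. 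A Fubini/Tonelli argument then shows $R^*\vp$ is finite a.e. and integrable over compact subsets of $\agnk$. The chief technical point here is to verify that $R$ maps compactly supported bounded functions into functions that are integrable against $|\vp|$ over the locally relevant set — essentially a local version of part (i), which follows from the same fiber-integral analysis but without any decay hypothesis since the support is bounded.
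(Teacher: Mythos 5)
Your part (ii) is correct and is in substance the paper's own argument: the paper derives (ii) from the identity (\ref{lab8}), which is precisely Lemma \ref{L1} applied with $f$ equal to the indicator of the ball $\{|\t|<a\}$, whose Radon transform is $\const\,(a^2-|\z|^2)^{(k'-k)/2}$ on $\{|\z|<a\}$ and zero otherwise --- i.e.\ bounded with bounded support, exactly the facts you verify. Your treatment of the $C_\mu$ claim (polar coordinates on the $(k'-k)$-dimensional fiber) and of sharpness (quasi-radial borderline examples such as $f(\t)=(2+|\t|)^{-(k'-k)}\log^{-1}(2+|\t|)$, which lies in $L^p$ at the critical exponent but has divergent fiber integrals) is also sound; note the paper itself gives no proof of (i), citing instead \cite[Corollary 2.6]{Ru04a}.

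The genuine gap is in your $L^p$ argument for (i). A pointwise H\"older bound on the inner fiber integral cannot work: the fiber $\sig^\perp\cap\rkp$ is an unbounded copy of $\rkk$, so the constant $1$ is not in $L^{p'}$ there, and H\"older forces a weight $(1+|v+x|)^{s}$ with $sp'>k'-k$; when you then ``integrate in the remaining variables,'' the weighted factor reassembles into a weighted $L^p$ norm of $f$ with a \emph{growing} weight, which is not controlled by $\|f\|_p$. This failure is unavoidable, because no pointwise estimate of $(Rf)(\z)$ in terms of $\|f\|_p$ exists (one can concentrate an $L^p$ function along a single $k$-plane inside a fixed $\z_0$ so that $(Rf)(\z_0)=\infty$); only an a.e.\ statement obtained by integration in $\z$ is possible. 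The correct route is the very duality you use in (ii): with $\chi_a$ the indicator of $\{|\z|<a\}$,
\[
\intl_{|\z|<a}(R|f|)(\z)\,d\z \;=\;\iagr |f(\t)|\,(R^*\chi_a)(\t)\,d\t\;\le\;\|f\|_p\,\|R^*\chi_a\|_{p'},
\]
and the key missing ingredient is the decay estimate $(R^*\chi_a)(\xi,u)=O\big((1+|u|)^{-(n-k')}\big)$, which holds because the set of $\eta\supset\xi$ with $|\Pr_{\eta^\perp}u|<a$ corresponds to the subspaces $\sig\in G_{n-k,\,k'-k}$ of $\xi^\perp$ lying within angular distance $a/|u|$ of the direction $u/|u|$, a set of measure $O\big((a/|u|)^{n-k'}\big)$ by a codimension count. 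Then $R^*\chi_a\in L^{p'}(\agnk)$ if and only if $(n-k')p'>n-k$, which is equivalent to $p<(n-k)/(k'-k)$; this, and not a fiber-H\"older balance, is where the sharp exponent comes from.
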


The statement (i) is proved in   \cite[Corollary 2.6]{Ru04a}. The statement (ii) follows from the equality
\be\label{lab8} \intl_{|\t|<a} (R^*\vp)(\t) \, d\t =
 \const \intl_{|\z|<a}\vp (\z) \,(a^2 \! - \! |\z|^2)^{(k' - k)/2} \,d \z  \ee
which is a particular case of the formula (2.19) from  \cite{Ru04a}.

We will also work with weighted spaces (\ref{lab10}).
\begin{lemma} \label{L3} \cite[Proposition  1.2]{Ru04a} For $\lam =n-k'$, the Radon transform
(\ref{lab2}) is a linear bounded operator from $L^1_\lam
(G(n,k))$ to $L^1_{\lam +\del} (G(n,k')),$ $\forall \del >0$.
The exponent $\lam =n-k'$ is best possible.
\end{lemma}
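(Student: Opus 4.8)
The plan is to prove the boundedness by duality against the weight, and then to settle sharpness by an explicit radial counterexample. First I would observe that $|(Rf)(\z)|\le (R|f|)(\z)$ because the canonical measure is positive, so it suffices to bound the $L^1_{\lam+\del}$-norm of $R|f|$. Choosing the nonnegative test function $\psi(\z)=(1+|\z|)^{-(\lam+\del)}$ and applying Lemma \ref{L1} to $|f|$ and $\psi$, I would write
\be
\iagrp \frac{(R|f|)(\z)}{(1+|\z|)^{\lam+\del}}\,d\z=\iagr |f(\t)|\,(R^*\psi)(\t)\,d\t .
\ee
With $\lam=n-k'$, the asserted boundedness is therefore \emph{equivalent} to the pointwise estimate $(R^*\psi)(\t)\le C\,(1+|\t|)^{-(n-k')}$ with $C$ independent of $\t$.

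Next I would compute $(R^*\psi)(\t)$ for $\t=\t(\xi,u)$ using the representation (\ref{lab5}). Writing $\eta=\xi\oplus\mu$ with $\mu$ ranging over the $(k'-k)$-planes of $\xi^\perp\cong\rnk$, one has $\eta^\perp\cap\xi^\perp=\mu^\perp$ (taken inside $\xi^\perp$), an $(n-k')$-dimensional subspace, so that $|\z|=|\Pr_{\eta^\perp}u|=|\Pr_{\mu^\perp}u|$. The measure $d_\xi\eta$ is invariant under the action of $O(n-k)$ on $\xi^\perp$, and the squared length of the projection of a fixed unit vector onto a uniformly random $(n-k')$-plane in $\rnk$ obeys a Beta law with parameters $\big((n-k')/2,(k'-k)/2\big)$. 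Hence only $|u|=|\t|$ matters and the integral collapses to the one-dimensional normal form
\be
(R^*\psi)(\t)=c\intl_0^1 (1+|\t|\,s)^{-(\lam+\del)}\,s^{\,n-k'-1}\,(1-s^2)^{(k'-k)/2-1}\,ds ,
\ee
which is finite for each $\t$ since $n>k'$ and $k'>k$.

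To estimate this, after the substitution $t=|\t|\,s$ it becomes
\be
(R^*\psi)(\t)=c\,|\t|^{-(n-k')}\intl_0^{|\t|}(1+t)^{-(\lam+\del)}\,t^{\,n-k'-1}\,(1-t^2/|\t|^2)^{(k'-k)/2-1}\,dt .
\ee
When $k'-k\ge 2$ the last factor is $\le 1$, and the remaining integral is dominated by $\intl_0^\infty (1+t)^{-(n-k'+\del)}\,t^{\,n-k'-1}\,dt$, which converges \emph{precisely because} $\del>0$ (the exponent at infinity is $-1-\del$); this is the only point where the gain $\del$ is used. When $k'-k=1$ the factor $(1-t^2/|\t|^2)^{-1/2}$ is integrable near $t=|\t|$, and splitting the range at $t=|\t|/2$ yields the same bound, the upper piece contributing only $O(|\t|^{-\del})$. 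Together with the trivial bound for $|\t|$ in a compact set, this gives $(R^*\psi)(\t)\le C(1+|\t|)^{-(n-k')}$ and hence the boundedness.

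For sharpness I would show that $\lam$ cannot be increased beyond $n-k'$. The same computation also yields the matching lower bound $(R^*\psi)(\t)\ge c\,(1+|\t|)^{-(n-k')}$. Testing with a radial $f(\t)=g(|\t|)$ and using $\iagr h(|\t|)\,d\t=c\intl_0^\infty h(r)\,r^{\,n-k-1}\,dr$, membership $f\in L^1_\lam(\agnk)$ reduces to $\intl_1^\infty g(r)(1+r)^{-\lam}r^{\,n-k-1}\,dr<\infty$, while $Rf\in L^1_{\lam+\del}(\agnkp)$ would force $\intl_1^\infty g(r)(1+r)^{-(n-k')}r^{\,n-k-1}\,dr<\infty$. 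Taking $g(r)=(1+r)^a$ with $-(k'-k)\le a<\lam-(n-k)$ — a range that is nonempty exactly when $\lam>n-k'$ — makes the first integral converge and the second diverge, so $f\in L^1_\lam(\agnk)$ yet $Rf\notin L^1_{\lam+\del}(\agnkp)$. Thus $\lam=n-k'$ is best possible. The main obstacle is the projection computation of the second paragraph, i.e.\ reducing $R^*\psi$ to the Beta-type one-dimensional integral with the correct exponents $s^{\,n-k'-1}(1-s^2)^{(k'-k)/2-1}$; once this normal form is available, the estimate and the counterexample are routine.
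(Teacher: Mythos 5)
Your proof is correct, but there is nothing in the paper to compare it against: the paper states this lemma as a quotation of \cite[Proposition 1.2]{Ru04a} and gives no proof of its own, so the relevant comparison is with that source and with computations the paper performs elsewhere for other purposes. Your route — dualizing against $\psi(\z)=(1+|\z|)^{-(\lam+\del)}$ via Lemma \ref{L1}, reducing $R^*\psi$ to a one-dimensional Beta-type integral, and exhibiting a radial counterexample for $\lam>n-k'$ — is sound and is essentially the same circle of ideas as in \cite{Ru04a}. Three remarks. First, the word ``equivalent'' in your opening paragraph overstates what you use: only the implication (pointwise bound on $R^*\psi$) $\Rightarrow$ (operator bound) is needed, and that is the direction your estimate delivers; the converse, while true for positive operators between weighted $L^1$ spaces, is never invoked, and neither is the matching lower bound $(R^*\psi)(\t)\ge c(1+|\t|)^{-(n-k')}$ you mention before the sharpness argument — your counterexample is independent of it. Second, the projection step you flag as the main obstacle (the Beta law for $|\Pr_{\eta^\perp}u|^2$ with $\eta=\xi\oplus\mu$, $\mu$ uniform in $G_{k'-k}(\xi^\perp)$) need not be quoted from probability folklore: it is precisely the computation this paper carries out in Section 4.1 by passing to bi-spherical coordinates — compare your kernel $s^{n-k'-1}(1-s^2)^{(k'-k)/2-1}$ with the one appearing in (\ref{m3})--(\ref{am3xd}) — so a rigorous writeup can simply repeat that derivation. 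Third, your sharpness argument is correct and rests on a fact the paper does contain: a nonnegative radial $f$ with $\int_1^\infty f_0(r)\,r^{k'-k-1}\,dr=\infty$ has $Rf\equiv\infty$, by Lemma \ref{liom} together with Lemma \ref{lifa2}(ii); the exponent bookkeeping ($a\ge -(k'-k)$ forces divergence of the image, $a<\lam-(n-k)$ gives $f\in L^1_\lam$, and this range is nonempty exactly when $\lam>n-k'$) checks out.
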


\begin{lemma} \label{L4} \cite[Propositions  1.3, 1.4]{Ru04a}

{\rm (i)} For $f \in L^p (\agnk), \; 1
\le p < (n-k)/(k' -k)$ or $ f \in L^1_{n-k'} (G(n,k))$, the Radon
transform $Rf$  is injective if and only if $ k+k' \le
n-1$.

{\rm (ii)}  For $\vp \in L^1_{k+1}(\agnkp)$, the
dual Radon transform $R^*\vp$  is injective if and only
if $ k+k' \ge n-1$.
\end{lemma}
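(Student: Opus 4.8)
The plan is to diagonalize both operators by the Euclidean Fourier transform in the translation variable and thereby reduce each injectivity assertion to a known statement on \emph{compact} Grassmannians. For $f$ on $\agnk$ write $f_\xi(u)=f(\xi,u)$ with $u\in\xi^\perp\cong\rnk$, and let $\hat f_\xi$ denote its Fourier transform on $\xi^\perp$. Starting from (\ref{lab2z}) and using, for $\xi\subset\eta$, the orthogonal splitting $\xi^\perp=\eta^\perp\oplus(\eta\cap\xi^\perp)$ together with the substitution $u=v+x$, a direct computation yields the projection--slice identity
\[
\widehat{(Rf)}_\eta(w)=\int_{G_k(\eta)}\hat f_\xi(w)\,d_\eta\xi,\qquad w\in\eta^\perp .
\]
Thus, along each covector direction, $R$ turns into an averaging over the $k$-subspaces contained in $\eta$.

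To prove part (i) I would freeze a unit vector $\theta$ and a radius $r>0$ and set $w=r\theta$. The requirement $w\in\eta^\perp$ forces $\eta\subset\theta^\perp$, so $\eta$ runs over $G_{k'}(\theta^\perp)$ while the subspaces $\xi\subset\eta$ run over $G_k(\theta^\perp)$, where $\theta^\perp\cong\bbr^{n-1}$. The identity above then says that $Rf=0$ is equivalent to the vanishing of the \emph{compact} Grassmann Radon transform $G_k(\theta^\perp)\to G_{k'}(\theta^\perp)$ applied to the function $\xi\mapsto\hat f_\xi(r\theta)$, for every $\theta$ and every $r$. By the known injectivity of this compact transform (see \cite{GR04}), valid exactly when $k+k'\le n-1$, we obtain $\hat f_\xi(r\theta)\equiv0$; letting $r\theta$ sweep out all of $\xi^\perp$ gives $f_\xi\equiv0$ for a.e.\ $\xi$, hence $f=0$.

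For part (ii) I would run the same computation on (\ref{lab5}). Here the orthogonal projection $\Pr_{\eta^\perp}u$ means that, after integrating $e^{-i\langle u,w\rangle}$ over the complementary variable in $\eta\cap\xi^\perp$, one is left with a factor supported on $\{w\in\eta^\perp\}$; the surviving object is, direction by direction, the \emph{dual} compact Grassmann Radon transform on $\theta^\perp\cong\bbr^{n-1}$ applied to $\eta\mapsto\hat\vp_\eta(r\theta)$. Via the complementation $\zeta\mapsto\zeta^\perp$ on $\bbr^{n-1}$ this dual transform is equivalent to the inclusion transform between the complementary dimensions $n-1-k'$ and $n-1-k$, which is injective if and only if $(n-1-k')+(n-1-k)\le n-1$, i.e.\ if and only if $k+k'\ge n-1$. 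When either inequality fails, the corresponding compact transform has a nontrivial kernel; choosing a kernel element depending measurably on $(\theta,r)$ and transporting it back by the inverse Fourier transform produces a nonzero element of the kernel of $R$ (resp.\ $R^*$), which establishes sharpness.

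The main obstacle is not the dimension count but the analytic justification in the stated function classes. For $f\in L^p(\agnk)$ or $f\in L^1_{n-k'}(G(n,k))$ the slices $f_\xi$ need not be integrable on $\xi^\perp$, so $\hat f_\xi$ must be interpreted in the sense of distributions and the projection--slice identity validated by a density/approximation argument; the same care is needed for $\vp\in L^1_{k+1}(\agnkp)$ in part (ii), where the distributional factor noted above must be handled rigorously. Equally delicate is the sharpness construction: the $(\theta,r)$-dependence of the compact kernel element must be arranged so that the reconstructed function genuinely lies in the prescribed space and has exactly the decay making $Rf$ (resp.\ $R^*\vp$) well defined, which is precisely where the exponents $p<(n-k)/(k'-k)$ and $\lambda=n-k'$ enter.
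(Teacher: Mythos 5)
Your skeleton is the classical projection--slice method of Gonzalez and Kakehi: Fourier-transform in the translation variable, observe that $\widehat{(Rf)}_\eta(w)=\int_{G_k(\eta)}\hat f_\xi(w)\,d_\eta\xi$ for $w\in\eta^\perp$, and reduce injectivity, direction by direction, to the Funk--Radon transform between compact Grassmannians of subspaces of $\theta^\perp\cong\bbr^{n-1}$, whose injectivity criterion from \cite{GR04} gives exactly the thresholds $k+k'\le n-1$ (for $R$) and, via orthocomplementation, $k+k'\ge n-1$ (for $R^*$). The dimension count is correct. Note, however, that the paper does not prove this lemma at all: it cites \cite[Propositions 1.3, 1.4]{Ru04a}, where the result is obtained by a completely different mechanism --- a stereographic-projection-type correspondence that identifies $R$ and $R^*$ with Funk--Radon transforms on the \emph{compact} Grassmannians of $(k+1)$- and $(k'+1)$-dimensional subspaces of $\bbr^{n+1}$, to which the criterion of \cite{GR04} is then applied. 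That detour exists precisely because the Fourier method does not work in the function classes named in the lemma.

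And that is the genuine gap in your proposal. The decisive step --- evaluating $\hat f_\xi$ at points $r\theta$, equivalently restricting the distribution $\hat f_\xi$ to the subspace $\eta^\perp$ --- requires $\hat f_\xi$ to be at least a continuous function, i.e. essentially $f_\xi\in L^1(\xi^\perp)$. In the stated classes this fails: the admissible range of exponents includes $p>2$ (already for $k=1$, $k'=2$, $n=4$ one has $p<3$), where $\hat f_\xi$ is a tempered distribution admitting no pointwise values and no trace on lower-dimensional subspaces; for $f\in L^1_{n-k'}(G(n,k))$ and for $\vp\in L^1_{k+1}(\agnkp)$ the slices are only weighted-$L^1$, with the same defect, and in part (ii) the delta-factor supported on $\{w\in\eta^\perp\}$ makes the formal computation still more singular. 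Your proposed fix by ``density/approximation'' does not close this: mollifying in the translation variable with a radial kernel does preserve the equation $Rf=0$, but the mollified slices are still non-integrable, so the restriction step remains undefined, while truncating $f$ to force integrability destroys the equation $Rf=0$. This is exactly the obstruction the authors themselves flag in Subsection 3.2 (``the use of the Fourier transform leads to inevitable restrictions on the class of functions''), and it is why they replace the Fourier transform by the Radon--John transform there. Separately, the sharpness half is only gestured at: to exhibit non-injectivity when the inequalities fail, you must produce a kernel field $(\theta,r)\mapsto\Psi_{\theta,r}$ smooth enough that its slice-wise inverse Fourier transform is a nonzero function genuinely lying in $L^p(\agnk)$ or $L^1_{n-k'}(G(n,k))$ (resp. $L^1_{k+1}(\agnkp)$); no such construction is given. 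As written, the argument proves the lemma only for a subclass of well-decaying (Schwartz-type) functions, not for the classes claimed.
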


\subsection{Fractional Integrals and Derivatives on the Half-Line}

More information about fractional integrals in this section can be found in \cite{Ru13b, Ru15}.
Let $f$ be a  function on  $\bbr_+=
(0,\infty)$. For $\alpha
> 0$ and $t>0$, we consider two types of {\it Riemann-Liouville fractional integrals} (left-sided and
right-sided) defined by
\be\label{rl+} (I^\a_{+}f ) (t) \!= \!
\frac{1}{\Gamma (\alpha)} \intl^t_0 \frac{f(r) \,dr} {(t  \!- \! r)^{1-
\alpha}},   \quad  (I^\a_{-}f ) (t)  \!= \! \frac{1}{\Gamma (\alpha)} \intl_t^{\infty}
\frac{f(r) \,dr} {(r \!- \!t)^{1- \alpha}}.\ee
 Fractional derivatives $\D^\a_{\pm}\vp$  of order $\a>0$ are defined as left
inverses of the corresponding fractional integrals, so that
\be\label{09zsew}\D^\a_{\pm}I^\a_{\pm} f=f.\ee
 The operators  $\D^\a_{\pm}$ may have different analytic forms  depending on the class of functions.
For example, if $\alpha = m +
\alpha_0, \;
 m = [\alpha], \; 0 \le \alpha_0 <
1$, then
 \be\label{frr+}\Cal D^\a_{\pm} \vp = (\pm d/dt)^{m +1} I^{1 -
\alpha_0}_{\pm} \vp. \ee
The  existence of the fractional derivative  and the equality (\ref{09zsew}) must be justified at each occurrence.
 The expressions (\ref{frr+})  are called {\it Riemann-Liouville fractional derivatives} of $\vp$.

 We shall also work with the so-called {\it modified Erd\'elyi-Kober fractional integrals}  having
 the form
\bea \label {as34b12}(I^\a_{+, 2} f)(t)&=&\frac{2}{\Gam
(\a)}\intl_0^t (t^2 -r^2)^{\a -1}f (r) \, r\, dr\quad (\text {\rm left-sided}),\\
\label{eci} (I^\a_{-, 2} f)(t)&=&\frac{2}{\Gam
(\a)}\intl_t^\infty (r^2 - t^2)^{\a -1}f (r)  \, r\, dr\quad (\text {\rm right-sided}),\quad
\eea
where $t>0$. Below we review basic facts from \cite[Subsection 2.6.2]{Ru15} related to the existence of these integrals and the corresponding inversion formulas.

\begin{lemma}\label{lifa2}${}$\hfill

{\rm (i)} The integral $(I^\a_{+, 2} f)(t)$ is absolutely convergent for almost all $t>0$ whenever $r\to rf(r)$ is a locally integrable function on $\bbr_+$.

{\rm (ii)} Let $a>0$. If
 \be\label{for10z} \intl_a^\infty |f(r)|\, r^{2\a -1}\, dr
<\infty, \ee
 then $(I^\a_{-, 2} f)(t)$ is finite for almost all $t>a$.
  If  $f$ is non-negative, locally integrable on $[a,\infty)$, and (\ref{for10z}) fails, then $(I^\a_{-, 2} f)(t)=\infty$ for every $t\ge a$.

 \end{lemma}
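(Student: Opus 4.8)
The plan is to reduce the modified Erd\'elyi--Kober integrals (\ref{as34b12}) and (\ref{eci}) to the ordinary Riemann--Liouville integrals (\ref{rl+}) by the substitution $s=r^2$, $\tau=t^2$. Setting $g(s)=f(\sqrt{s})$ and using $ds=2r\,dr$, I would first record the identities
\[
(I^\a_{+,2}f)(t)=(I^\a_+ g)(\tau),\qquad (I^\a_{-,2}f)(t)=(I^\a_- g)(\tau),\qquad \tau=t^2,
\]
valid for the absolute-value versions as well. Since $t\mapsto t^2$ is a smooth diffeomorphism of $\bbr_+$, almost-everywhere statements in $\tau$ and in $t$ are equivalent. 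The hypotheses also translate cleanly: because $\int_0^A |g(s)|\,ds=2\int_0^{\sqrt A} r|f(r)|\,dr$, local integrability of $r\mapsto rf(r)$ on $\bbr_+$ is equivalent to local integrability of $g$ on $\bbr_+$; and because $\int_{a^2}^\infty |g(s)|\,s^{\a-1}\,ds=2\int_a^\infty |f(r)|\,r^{2\a-1}\,dr$, condition (\ref{for10z}) is equivalent to $\int_{a^2}^\infty |g(s)|\,s^{\a-1}\,ds<\infty$. Thus the lemma becomes the standard existence statement for $I^\a_\pm$ from \cite{Ru13b, Ru15}.

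For part (i) I would prove absolute convergence of $(I^\a_+g)(\tau)$ for a.e. $\tau>0$ by Tonelli's theorem: for every $A>0$,
\[
\int_0^A\!\!\int_0^\tau (\tau-s)^{\a-1}|g(s)|\,ds\,d\tau=\int_0^A |g(s)|\,\frac{(A-s)^\a}{\a}\,ds\le \frac{A^\a}{\a}\int_0^A|g(s)|\,ds<\infty,
\]
so the inner integral is finite for a.e. $\tau\in(0,A)$; letting $A\to\infty$ gives the claim for a.e. $t>0$.

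For the ``if'' part of (ii) I would fix $b>a^2$, and for $\tau$ in a bounded interval split $(I^\a_-g)(\tau)$ at a point $2b$ beyond it. The tail $\int_{2b}^\infty (s-\tau)^{\a-1}|g(s)|\,ds$ I would control by the comparison $(s-\tau)^{\a-1}\le c\,s^{\a-1}$ (valid for $s\ge 2b$ and bounded $\tau$, the constant $c$ being handled separately in the cases $\a\ge1$ and $0<\a<1$), which makes it finite by the transformed condition (\ref{for10z}); the remaining finite-interval part is finite for a.e. $\tau$ by the same Tonelli estimate as in (i). Since $b$ is arbitrary, $(I^\a_{-,2}f)(t)<\infty$ for a.e. $t>a$.

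For the divergence part, with $f\ge0$ (hence $g\ge0$) and (\ref{for10z}) failing, I would use the reverse comparison: for $s\ge 2\tau$ one has $(s-\tau)^{\a-1}\ge c_\a\, s^{\a-1}$ with $c_\a>0$ (again splitting $\a\ge1$ and $0<\a<1$), so that $(I^\a_-g)(\tau)\ge c_\a\int_{2\tau}^\infty s^{\a-1}g(s)\,ds$. Since $g$ is locally integrable and $s^{\a-1}$ is bounded on $[a^2,2\tau]$, the failure of $\int_{a^2}^\infty s^{\a-1}g(s)\,ds<\infty$ forces $\int_{2\tau}^\infty s^{\a-1}g(s)\,ds=\infty$, whence $(I^\a_{-,2}f)(t)=\infty$ for every $t\ge a$. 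The only delicate point throughout is the elementary but necessary case distinction between $\a\ge1$ and $0<\a<1$ when comparing $(s-\tau)^{\a-1}$ with $s^{\a-1}$ near infinity; everything else is a direct transcription of known facts about the Riemann--Liouville integrals.
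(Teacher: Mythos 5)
Your proof is correct. Note first that the paper itself contains no proof of this lemma: it is quoted as a known fact from \cite[Subsection 2.6.2]{Ru15}, so there is no internal argument to measure yours against. That said, your reduction is exactly the mechanism the paper itself encodes: the substitution $g(s)=f(\sqrt{s})$, $\tau=t^2$ is the map $\Lam$ of (\ref{0u8n}) by which the paper passes between the Erd\'elyi--Kober operators (\ref{as34b12})--(\ref{eci}) and the Riemann--Liouville operators (\ref{rl+}), and under it the constants match exactly, so your identities $(I^\a_{+,2}f)(t)=(I^\a_+g)(\tau)$ and $(I^\a_{-,2}f)(t)=(I^\a_-g)(\tau)$ are precise, not just qualitative. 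Your Tonelli computation for (i), the split of $(I^\a_-g)(\tau)$ at $2b$ with the bound $(s-\tau)^{\a-1}\le c\,s^{\a-1}$ for the finiteness half of (ii), and the reverse bound $(s-\tau)^{\a-1}\ge c_\a\,s^{\a-1}$ for $s\ge 2\tau$ in the divergence half are all sound, including the necessary case split between $\a\ge 1$ and $0<\a<1$. Two small points deserve to be made explicit. In the finiteness half of (ii), the Tonelli estimate over $[\tau,2b]$ needs $\int_{a^2}^{2b}|g(s)|\,ds<\infty$; this is not a stated hypothesis but follows from the transformed condition (\ref{for10z}), since $s^{\a-1}$ is bounded away from zero on the compact interval $[a^2,2b]$. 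In the divergence half, the step $\int_{a^2}^{2\tau}s^{\a-1}g(s)\,ds<\infty$ uses the assumed local integrability of $f$ on $[a,\infty)$ (hence of $g$ on $[a^2,\infty)$) together with boundedness of $s^{\a-1}$ on that compact interval. With these one-line observations supplied, your argument is a complete and self-contained proof of the cited fact.
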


Fractional derivatives of the  Erd\'elyi-Kober type  are defined as the  left inverses
$ \Cal D^\a_{\pm, 2} = (I^\a_{\pm, 2})^{-1}$.  We have
\be\label {0u8n}
\Cal D^\a_{\pm, 2}\vp = \Lam^{-1}\Cal D^\a_{\pm} \Lam \vp, \qquad    (\Lam f) (t)=f (\sqrt {t}),\ee
where the  Riemann-Liouville  derivatives $\Cal D^\a_{\pm}$ can be
chosen in different forms, depending on our needs. For example, if $\alpha = m +
\alpha_0, \;
 m = [\alpha], \; 0 \le  \alpha_0 <
1$, then, formally, (\ref{frr+}) yields
\be\label{frr+z}   \Cal D^\a_{\pm, 2} \vp=(\pm D)^{m +1}\,
I^{1 - \alpha_0}_{\pm, 2}\vp, \qquad D=\frac {1}{2t}\,\frac {d}{dt}.\ee
This formula is well-justified in the "$+$" case when $\vp=I^\a_{+, 2} f$ with $rf(r)$ being locally integrable on $\bbr_+$; cf. Lemma \ref{lifa2}(i).

Inversion of the operator $I^\a_{-, 2}$ deserves special consideration because
 the analytic expression of $\Cal D^\a_{-, 2}$ essentially depends on the behavior of functions at infinity.

\begin{theorem}\label{78awqe} Let $\vp= I^\a_{-, 2} f$, where $f$  satisfies   (\ref{for10z}) for every $a>0$. Then  $f(t)= (\Cal D^\a_{-, 2} \vp)(t)$ for almost all $t\in \bbr_+$ and   $\Cal D^\a_{-, 2} \vp$ has one of the following forms.

\noindent {\rm (i)} If $\a=m$ is an integer, then
\be\label {90bedr}
\Cal D^\a_{-, 2} \vp=(- D)^m \vp, \qquad D=\frac {1}{2t}\,\frac {d}{dt}.\ee

\noindent {\rm (ii)}   If $\alpha = m +\alpha_0, \; m = [ \alpha], \; 0 < \alpha_0 <1$, then
\be\label{frr+z33} \Cal D^\a_{-, 2} \vp = t^{2(1-\a+m)}
(- D)^{m +1} t^{2\a}\psi, \quad \psi=I^{1-\a+m}_{-,2} \,t^{-2m-2}\,\vp.\ee
 In particular, for   $\a=k/2$, $k$ odd,
 \be\label{frr+z3}  \Cal D^{k/2}_{-, 2} \vp = t\,(- D)^{(k+1)/2} t^{k}I^{1/2}_{-,2} \,t^{-k-1}\,\vp.\ee
Alternatively,
\be\label{frr+z4} \Cal D^\a_{-, 2} \vp=2^{-2\a}\, \Cal D^{2\a}_- \, t\,  I^\a_{-, 2}\, t^{-2\a-1} \, \vp,\ee
where $ \Cal D^{2\a}_-$ denotes the Riemann-Liouville  fractional derivative of order $2\a$ (cf. (\ref{frr+})).

If, moreover, $\int_a^\infty |f(t)|\, t^{2m +1}\, dt
<\infty$ for all $a>0$, then
\be\label{frr+z4y0} \Cal D^\a_{-, 2} \vp=(- D)^{m +1} I^{1-\a+m}_{-, 2}\, \vp.\ee
\end{theorem}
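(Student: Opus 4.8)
The plan is to reduce the entire statement to the classical right-sided Riemann--Liouville calculus on the half-line by means of the quadratic substitution encoded in $\Lam$, $(\Lam f)(t)=f(\sqrt{t}\,)$, and the conjugation formula (\ref{0u8n}). First I would record the intertwining identity $I^\a_{-,2}=\Lam^{-1}I^\a_{-}\Lam$, obtained by putting $s=r^2$ in (\ref{eci}): writing $g=\Lam f$ and $\psi=\Lam\vp$, the change of variable gives $\psi=I^\a_- g$, while the admissibility hypothesis (\ref{for10z}) turns into $\int_b^\infty|g(s)|\,s^{\a-1}\,ds<\infty$ for every $b>0$ (since $r^{2\a-1}\,dr=\tfrac12\,s^{\a-1}\,ds$), which is exactly the condition under which $I^\a_-$ is invertible. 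Invoking the inversion theory for $I^\a_-$ in \cite{Ru15} gives $g=\D^\a_-\psi$ a.e., and applying $\Lam^{-1}$ yields $f=\Lam^{-1}\D^\a_-\Lam\,\vp=\D^\a_{-,2}\vp$ a.e.\ by (\ref{0u8n}). This is the main assertion; the remaining forms amount to transcribing each analytic representation of $\D^\a_-$ through $\Lam$.

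The one computation driving this transcription is the chain rule $\Lam^{-1}(d/ds)\Lam=D$ with $D=\tfrac{1}{2t}\,d/dt$, together with the weight-transfer rule $\Lam^{-1}s^{\b}\Lam=t^{2\b}$ and the conjugation $\Lam^{-1}I^{\b}_-\Lam=I^{\b}_{-,2}$ of the auxiliary integrals. For integer $\a=m$ one has $\D^\a_-=(-d/ds)^m$, whose conjugate is $(-D)^m$; this is (\ref{90bedr}). For $\a=m+\a_0$, $0<\a_0<1$, I would start from the weighted Riemann--Liouville representation $\D^\a_-\psi=s^{1-\a+m}(-d/ds)^{m+1}\,s^{\a}\,I^{1-\a+m}_-\,s^{-m-1}\psi$, which is the form valid under the bare condition $\int^\infty|g|\,s^{\a-1}\,ds<\infty$ (see \cite{Ru15}); transferring its power weights ($s^{\b}\mapsto t^{2\b}$) and its fractional integral ($I^{1-\a+m}_-\mapsto I^{1-\a+m}_{-,2}$) produces exactly (\ref{frr+z33}), and setting $\a=k/2$ with $k$ odd (so $m=(k-1)/2$ and $1-\a+m=\tfrac12$) specializes it to (\ref{frr+z3}).

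For the alternative (\ref{frr+z4}) I would instead use a duplication identity. A direct Fubini computation, swapping the order of integration in $t\,I^\a_{-,2}\,t^{-2\a-1}I^\a_{-,2}f$, reduces the inner integral to the beta evaluation
\[
\intl_{t^2}^{\rho^2}(u-t^2)^{\a-1}(\rho^2-u)^{\a-1}\,u^{-\a-1/2}\,du
=B(\a,\tfrac12)\,\frac{(\rho-t)^{2\a-1}}{t\rho},
\]
which is the Legendre duplication formula for $\Gamma$ in disguise; it yields the operator identity $t\,I^\a_{-,2}\,t^{-2\a-1}I^\a_{-,2}=2^{2\a}\,I^{2\a}_-$ on admissible $f$. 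Applying the ordinary Riemann--Liouville derivative $\D^{2\a}_-$, a left inverse of $I^{2\a}_-$, and dividing by $2^{2\a}$ then inverts $\vp=I^\a_{-,2}f$ and gives (\ref{frr+z4}); note that here $\D^{2\a}_-$ acts in the variable $t$ itself, which is why the prefactor is $2^{-2\a}$ rather than a power of $D$. Finally, under the extra decay $\int_a^\infty|f(t)|\,t^{2m+1}\,dt<\infty$ (equivalently $\int^\infty|g|\,s^{m}\,ds<\infty$) the integral $I^{1-\a+m}_{-,2}\vp$ converges absolutely, so the plain form $\D^\a_-=(-d/ds)^{m+1}I^{1-\a+m}_-$ becomes legitimate and its conjugate is the clean formula (\ref{frr+z4y0}).

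The main obstacle is precisely the point at which the plain derivative breaks down. Under only (\ref{for10z}) the auxiliary integral $I^{1-\a+m}_{-,2}\vp$ (equivalently $I^{1-\a+m}_-\psi$) need not converge, because $\psi=I^\a_-g$ carries no decay at infinity, so the naive formula is meaningless and the weighted forms must be used. The delicate part is therefore to verify that inserting the monomial $s^{-m-1}$ (i.e.\ $t^{-2m-2}$) restores absolute convergence of the fractional integral, that the ensuing $(m+1)$-fold differentiation of $s^{\a}I^{1-\a+m}_-(\cdots)$ is legitimate and interacts correctly with the outer weight $s^{1-\a+m}$, and that the outcome equals $g$ almost everywhere. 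These are exactly the estimates underpinning the right-sided Riemann--Liouville inversion in \cite{Ru15}, so once the $\Lam$-conjugation is in place the argument reduces to quoting them.
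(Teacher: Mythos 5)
The paper itself contains no proof of Theorem \ref{78awqe}: the statement is reproduced from \cite[Subsection 2.6.2]{Ru15}, and the only mechanism the paper exhibits is the conjugation identity (\ref{0u8n}). Your argument is exactly that route made explicit --- transporting the hypothesis (\ref{for10z}), the operators, and each analytic form of the derivative through $\Lam$ to the right-sided Riemann--Liouville setting whose inversion theory is proved in \cite{Ru15} --- so it is correct and is essentially the intended proof. Your one genuinely self-contained ingredient, the Fubini/beta-function verification of the composition identity $t\, I^\a_{-,2}\, t^{-2\a-1} I^\a_{-,2}=2^{2\a} I^{2\a}_-$ underlying (\ref{frr+z4}), checks out: the stated beta evaluation is correct, and Legendre duplication indeed produces the constant $2^{2\a}$.
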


Many other inversion formulas for fractional integrals can be found in \cite[Section 2.4]{Ru15}.

\subsection{Radon-John Transforms}

Given an integer  $0< d<n$, the Radon-John $d$-plane transform of a function $f$ on $\rn$ is a function $R_d f$ on $G(n,d)$ defined by the integral
\be\label{lab20} (R_d f)(\t)=\intl_\t f(x)\,d_\t x,
\ee
where $d_\t x$ stands for the Euclidean volume element of the plane $\t$. The existence of this integral depends on the class of functions $f$.  If $f \in C_\mu  (\bbr^n)$, $\mu>d$, then $(R_d f)(\t)$ is finite for all $\t\in G(n,d)$. If $f \in L^p (\bbr^n)$,  $1\le p<n/d$, then $(R_d f)(\t)$ is finite a.e. on $G(n,d)$. Both restriction $\mu>d$ and $1\le p<n/d$ are  sharp.
We also have the following statement which is a  reformulation of Theorem 3.2 from \cite{Ru13b}.

\begin{lemma}\label{L6}  If
\be \label {lkmuxk}
\intl_{\rn}  \,\frac{|f(x)|} {(1+|x|)^{n-d}} \, dx<\infty,\ee
 then  $(R_d f)(\t)$ is finite for  almost all $\t\in G(n,d)$.
If $f$ is nonnegative, radial, and (\ref{lkmuxk}) fails, then  $ (R_d f)(\t)\equiv \infty$.
\end{lemma}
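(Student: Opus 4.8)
\emph{Plan.} The plan is to prove the two assertions by different routes: a.e.\ finiteness by testing $R_d|f|$ against a radial weight and using Tonelli's theorem, and the radial converse by reducing $R_d$ to the Erd\'elyi--Kober integral $I^{d/2}_{-,2}$ and invoking Lemma \ref{lifa2}(ii). Throughout I parametrize $\t\in G(n,d)$ as $\t=\t(\xi,u)$ with $\xi\in G_{n,d}$, $u\in\xi^\perp$, so $d\t=d\xi\,du$, $|\t|=|u|$, and
\be\label{pp1} (R_d f)(\xi,u)=\intl_\xi f(u+y)\,dy. \ee

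\textbf{A.e.\ finiteness.} I would introduce the weight $w(\rho)=(1+\rho)^{-(n-d+1)}$ and estimate $\intl_{G(n,d)}(R_d|f|)(\t)\,w(|\t|)\,d\t$. All integrands are nonnegative, so Tonelli applies: for fixed $\xi$ the orthogonal splitting $x=y+u$ ($y\in\xi$, $u\in\xi^\perp$, $\dist(x,\xi)=|u|$) turns the $u$- and $y$-integrals into $\intl_{\rn}|f(x)|\,w(\dist(x,\xi))\,dx$, and exchanging the $\xi$- and $x$-integrations gives
\be\label{pp2} \intl_{G(n,d)}(R_d|f|)(\t)\,w(|\t|)\,d\t=\intl_{\rn}|f(x)|\,W(|x|)\,dx, \quad W(r)=\intl_{G_{n,d}} w(\dist(x,\xi))\,d\xi, \ee
where $W$ depends only on $r=|x|$ by rotational invariance. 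The central point is that, as $\xi$ ranges over $G_{n,d}$, the normalized distance $s=\dist(x,\xi)/|x|$ has density proportional to $s^{n-d-1}(1-s^2)^{d/2-1}$ on $(0,1)$ (for $d=n-1$ this is the familiar law of a single coordinate of a point on $S^{n-1}$). Hence $W(r)=c_{n,d}\intl_0^1 w(rs)\,s^{n-d-1}(1-s^2)^{d/2-1}\,ds$, and the substitution $\rho=rs$ together with $\intl_0^\infty(1+\rho)^{-(n-d+1)}\rho^{n-d-1}\,d\rho<\infty$ gives $W(r)\le C\,(1+r)^{-(n-d)}$ for all $r\ge0$. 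Thus the left side of (\ref{pp2}) is bounded by a constant times (\ref{lkmuxk}) and is finite; since $w(|\t|)>0$ everywhere, $(R_d|f|)(\t)<\infty$, i.e.\ $(R_df)(\t)$ converges absolutely, for almost every $\t$.

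\textbf{Radial converse.} For radial $f$, write $f(x)=f_0(|x|)$. Because $u\perp y$, (\ref{pp1}) becomes a polar integral on $\xi\cong\bbr^d$, and with $t=|\t|$ the substitution $r=\sqrt{t^2+\rho^2}$ yields
\be\label{pp3} (R_df)(\t)=\sigma_{d-1}\intl_0^\infty f_0\big(\sqrt{t^2+\rho^2}\big)\,\rho^{d-1}\,d\rho =\pi^{d/2}\,(I^{d/2}_{-,2}f_0)(t), \ee
the last equality using (\ref{eci}) with $\a=d/2$ and $\sigma_{d-1}=2\pi^{d/2}/\Gamma(d/2)$. Now let $f\ge0$ be radial and locally integrable with (\ref{lkmuxk}) failing. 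Local integrability confines the obstruction to infinity, and since $(1+r)^{-(n-d)}\asymp r^{-(n-d)}$ for $r\ge1$, the failure of (\ref{lkmuxk}) is equivalent to $\intl_a^\infty f_0(r)\,r^{d-1}\,dr=\infty$ for every $a>0$, i.e.\ to the failure of (\ref{for10z}) with $\a=d/2$. By Lemma \ref{lifa2}(ii) (applied with this $a$), $(I^{d/2}_{-,2}f_0)(t)=\infty$ for every $t\ge a$; taking $a=t$ covers all $t>0$, while $t=0$ is immediate from (\ref{pp3}). Hence $(R_df)(\t)\equiv\infty$.

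\textbf{Main obstacle.} The crux is the weight computation: identifying the density $s^{n-d-1}(1-s^2)^{d/2-1}$ of $\dist(x,\xi)/|x|$ and extracting the sharp exponent $-(n-d)$ in $W(r)\le C(1+r)^{-(n-d)}$ (the factor $(1-s^2)^{d/2-1}$ needs a short splitting of the $s$-integral near $s=1$ when $d=1$). The remaining care is the standing local-integrability hypothesis in the converse: without it a radial singularity concentrated at the origin could leave $R_df$ finite away from a neighborhood of $0$, so that $\equiv\infty$ would fail.
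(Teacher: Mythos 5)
Your proposal is correct, but there is nothing in the paper to compare it with line by line: the paper does not prove this lemma at all, it simply states it as a reformulation of Theorem 3.2 of \cite{Ru13b}. Your argument is a self-contained reconstruction built from the paper's own toolkit. The a.e.-finiteness half is a weighted duality (Tonelli) estimate: you test $R_d|f|$ against the weight $(1+|\t|)^{-(n-d+1)}$ and compute the resulting radial weight $W$ through the Beta-type density $s^{n-d-1}(1-s^2)^{d/2-1}$ of $\dist(x,\xi)/|x|$; this is precisely the $k=0$, $\delta=1$ instance of the weighted $L^1$ bound of Lemma \ref{L3}, and the density computation is the same bi-spherical coordinate calculation the paper itself performs in Subsection 4.1. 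The bound $W(r)\le C(1+r)^{-(n-d)}$, including the extra splitting of the $s$-integral near $s=1$ when $d=1$, checks out. The radial half is the reduction $(R_df)(\t)=\pi^{d/2}\,(I^{d/2}_{-,2}f_0)(|\t|)$, i.e. Lemma \ref{L6a}, combined with the divergence statement of Lemma \ref{lifa2}(ii); this is surely the route the cited source takes as well, so your proof is a faithful, verifiable substitute for the missing one.

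One point deserves emphasis. You correctly flagged that the second assertion is false as literally stated and needs local integrability of $f$: for $f(x)=|x|^{-n}\chi_{\{|x|<1\}}(x)$, which is nonnegative and radial, condition (\ref{lkmuxk}) fails (the divergence comes from the origin), yet $(R_df)(\t)<\infty$ for every plane with $|\t|>0$, so $R_df\not\equiv\infty$. Under local integrability the failure of (\ref{lkmuxk}) must come from infinity, i.e. $\int_a^\infty f_0(r)\,r^{d-1}\,dr=\infty$ for every $a>0$, which is exactly the hypothesis of Lemma \ref{lifa2}(ii), and your argument then yields $(R_df)(\t)=\infty$ for every $\t$, planes through the origin included. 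So what you proved is the corrected statement; the paper's reformulation of \cite[Theorem 3.2]{Ru13b} silently drops a hypothesis that, by your counterexample, cannot be dropped.
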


For $x \in \rn$ and $\t \in G(n,d)$,
we denote by \be r=|x|=\dist (o,x), \qquad s=|\t|=\dist (o, \t)\ee
 the corresponding distances from the origin.

\begin{lemma}\label {L6a} {\rm  (cf. \cite[Lemma 2.1]{Ru04b})} If $f$ is
 a  radial function on $\rn$ satisfying  (\ref{lkmuxk}), then $R_d f$   is
 a  radial function on $ G(n,d)$. Moreover, if
 $f(x) \!\equiv \!f_0(r)$ and $(R_d f)(\t) \equiv F_0(s)$, then
\bea\label{ppaawsdz}
F_0(s)&=&\sig_{d-1}\intl_s^\infty f_0(r)
(r^2 -s^2)^{d/2 -1} r dr, \\
&=&\label{resex1k} \pi^{d/2} \,(I^{d/2}_{-,2} f_0)(s).
 \eea
\end{lemma}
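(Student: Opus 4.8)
The plan is to exploit rotation invariance to reduce the computation to an integral over a single conveniently placed plane, and then to recognize the resulting one-dimensional integral as the Erd\'elyi--Kober operator (\ref{eci}).

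First I would establish radiality. Since $f$ is $O(n)$-invariant and the Euclidean volume element $d_\t x$ on a $d$-plane is preserved by rotations, the substitution $x\mapsto g^{-1}x$ in (\ref{lab20}) gives $(R_d f)(g\t)=(R_d f)(\t)$ for every $g\in O(n)$; thus $R_d f$ is $O(n)$-invariant on $G(n,d)$. It remains to note that $O(n)$ acts transitively on affine $d$-planes at a fixed distance $s$ from the origin: writing $\t=\xi+u$ with $\xi\in\gnk$ (here a $d$-dimensional linear subspace) and $u\in\xi^\perp$, $|u|=s$, one first rotates $\xi$ onto a reference subspace and then uses the residual action of $O(\xi^\perp)$ to carry $u$ to any vector of length $s$. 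Hence $(R_d f)(\t)$ depends only on $s=|\t|$, and we may write $(R_d f)(\t)\equiv F_0(s)$.

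Next I would evaluate $F_0(s)$ on the representative plane $\t=\bbr^d+s\,e_{d+1}$, where $\bbr^d=\bbr e_1\oplus\cdots\oplus\bbr e_d$. A point of $\t$ has the form $x=(y,s,0,\dots,0)$ with $y\in\bbr^d$, so that $r=|x|=\sqrt{|y|^2+s^2}$ and, since $f$ is radial,
\be\label{plan-a}
F_0(s)=\intl_{\bbr^d} f_0\big(\sqrt{|y|^2+s^2}\,\big)\,dy.
\ee
Passing to polar coordinates $y=\rho\theta$, $\rho=|y|$, $\theta\in S^{d-1}$, integrating out $\theta$ to produce the factor $\sigma_{d-1}$, and then substituting $r=\sqrt{\rho^2+s^2}$ (so that $\rho^{d-1}\,d\rho=(r^2-s^2)^{d/2-1}\,r\,dr$, with $\rho=0,\infty$ corresponding to $r=s,\infty$) transforms (\ref{plan-a}) into the first asserted formula (\ref{ppaawsdz}). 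Finally, comparing with the definition (\ref{eci}) of $I^\a_{-,2}$ at $\a=d/2$, $t=s$, the integral in (\ref{ppaawsdz}) equals $\tfrac12\Gamma(d/2)\,(I^{d/2}_{-,2}f_0)(s)$, and since $\sigma_{d-1}=2\pi^{d/2}/\Gamma(d/2)$ the constant collapses to $\pi^{d/2}$, yielding (\ref{resex1k}).

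The only point requiring care---the main, and essentially minor, obstacle---is justifying the finiteness of all integrals and the validity of the equalities for almost every $s$. For radial $f$ the hypothesis (\ref{lkmuxk}) is equivalent to local integrability of $r^{n-1}f_0(r)$ near the origin together with $\int_a^\infty|f_0(r)|\,r^{d-1}\,dr<\infty$ for every $a>0$; the latter is precisely condition (\ref{for10z}) with $\a=d/2$, which by Lemma \ref{lifa2}(ii) guarantees that $(I^{d/2}_{-,2}f_0)(s)$ is finite for almost all $s$, while the a.e.\ finiteness of $R_d f$ itself is supplied by Lemma \ref{L6}. On the full-measure set where the integrals converge, Fubini's theorem legitimizes integrating out the angular variable and performing the substitution $r=\sqrt{\rho^2+s^2}$, so the chain of equalities holds a.e.\ as claimed.
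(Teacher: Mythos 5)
Your proof is correct. Note that the paper itself gives no proof of Lemma \ref{L6a} — it is quoted from \cite[Lemma 2.1]{Ru04b} — and your argument (rotation invariance plus transitivity of $O(n)$ on planes at fixed distance, evaluation on the representative plane $\bbr^d + s\,e_{d+1}$, polar coordinates, and the substitution $r=\sqrt{\rho^2+s^2}$, with the constant $\sigma_{d-1}\Gamma(d/2)/2=\pi^{d/2}$) is precisely the standard computation behind that cited result, and your reduction of hypothesis (\ref{lkmuxk}) in the radial case to condition (\ref{for10z}) with $\a=d/2$, so that Lemma \ref{lifa2}(ii) supplies the a.e.\ finiteness, correctly settles the convergence issues.
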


A variety of inversion formulas for the $d$-plane transform are known; see, e.g., \cite {GGG, H11,  Ru04b, Ru13, Ru13b} and references therein.
For example,  the following theorems were proved in \cite{Ru13b}.

\begin{theorem}\label{invr1abf}
A function $f \in C_\mu  (\bbr^n)$,  $\mu>d$, can be recovered from $\vp=R_df$ by the  formula
\be\label{nnxxzz}f(x)=(R_d^{-1}\vp)(x)= \lim\limits_{t\to 0}\, \pi^{-d/2} (\Cal D^{d/2}_{-, 2} F_x)(t), \ee
\be\label{podf}
F_x (t)= \intl_{SO(n)} \!  \vp (\gam \bbr^d +x + t\gam e_n) \,
 d\gam, \ee
where the  limit  is uniform on $\bbr^n$ and the  Erd\'elyi-Kober differential operator
$\Cal D^{d/2}_{-, 2}$ can be computed as follows.

\noindent {\rm (i)} If $d$ is even, then
\be\label {90bedrik}
\Cal D^{d/2}_{-, 2} F_x =(- D)^{d/2} F_x, \qquad D=\frac {1}{2t}\,\frac {d}{dt}.\ee

\noindent {\rm (ii)}  For any $1\le d\le n-1$,
\be\label{frr+z3k} \Cal D^{d/2}_{-, 2} F_x = t^{2-d+2m}
(- D)^{m +1} t^{d}\psi, \quad \psi=I^{1-d/2+m}_{-,2} \,t^{-2m-2}\,F_x,\ee
where $m=[d/2]$. Alternatively,
\be\label{frr+z4k}  \Cal D^{d/2}_{-, 2} F_x = 2^{-d}  \left (-\frac{d}{dt}\right)^d\, t\,  I^{d/2}_{-, 2}\, t^{-d-1} \,F_x.\ee
Under the stronger assumption $\mu >2+2[d/2] \, (>d)$,
 $\Cal D^{d/2}_{-, 2}$ can also be  computed as
\be\label{frr+z4yz}\Cal D^{d/2}_{-, 2} F_x =(- D)^{m +1} I^{1-\a+m}_{-, 2}\,F_x.\ee
\end{theorem}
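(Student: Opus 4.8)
The plan is to reduce the $d$-plane inversion to the one-dimensional Erd\'elyi-Kober inversion of Theorem \ref{78awqe} via the spherical mean of $f$ about the point $x$. Define
\[
\bar f_x(r)=\intl_{S^{n-1}} f(x+r\theta)\,d_*\theta,\qquad r\ge0,
\]
a function that is continuous on $[0,\infty)$ with $\bar f_x(0)=f(x)$. The heart of the argument is the identity
\be
F_x(t)=\pi^{d/2}\,(I^{d/2}_{-,2}\bar f_x)(t),\qquad t>0,
\ee
with $F_x$ as in \eqref{podf}; granting it, the recovery formula \eqref{nnxxzz} together with all the explicit forms \eqref{90bedrik}--\eqref{frr+z4yz} follow by specializing Theorem \ref{78awqe} to $\a=d/2$ and then letting $t\to0$.

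To establish this identity I would unfold $F_x$ through the definition of $R_d$. Writing $\bbr^d=\span\{e_1,\dots,e_d\}$ and using $e_n\perp\bbr^d$, the plane $\gamma\bbr^d+x+t\gamma e_n$ is $\{x+\gamma(te_n+y):y\in\bbr^d\}$, whence
\be
F_x(t)=\intl_{SO(n)}\intl_{\bbr^d} f\big(x+\gamma(te_n+y)\big)\,dy\,d\gamma.
\ee
For fixed $y$ the vector $\gamma(te_n+y)$ has $\gamma$-independent length $\rho=\sqrt{t^2+|y|^2}$ and, as $\gamma$ ranges over $SO(n)$, its direction is uniformly distributed on $S^{n-1}$; hence the inner rotational average equals $\bar f_x(\rho)$. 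Passing to polar coordinates $dy=\sigma_{d-1}s^{d-1}\,ds$ in $\bbr^d$ and substituting $r=\sqrt{t^2+s^2}$ yields
\be
F_x(t)=\sigma_{d-1}\intl_t^\infty \bar f_x(r)\,(r^2-t^2)^{d/2-1}\,r\,dr,
\ee
which is exactly \eqref{ppaawsdz}--\eqref{resex1k} of Lemma \ref{L6a} with $f_0=\bar f_x$, proving the claim. The Fubini interchanges are legitimate because $f\in C_\mu(\rn)$ with $\mu>d$ forces $|\bar f_x(r)|\le c\,(1+r)^{-\mu}$ for large $r$, so that $\intl_a^\infty|\bar f_x(r)|\,r^{d-1}\,dr<\infty$ for every $a>0$ --- precisely condition \eqref{for10z} with $\a=d/2$.

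With \eqref{for10z} in hand, I would apply Theorem \ref{78awqe} to $F_x=\pi^{d/2}I^{d/2}_{-,2}\bar f_x$ at $\a=d/2$. This gives $\pi^{-d/2}(\Cal D^{d/2}_{-,2}F_x)(t)=\bar f_x(t)$ for almost all $t$, and the case analysis there supplies the stated analytic forms of the operator: part (i), formula \eqref{90bedr}, becomes \eqref{90bedrik} when $d$ is even (so $d/2$ is an integer), while for general $d$ the expressions \eqref{frr+z33}, \eqref{frr+z3}, \eqref{frr+z4}, \eqref{frr+z4y0} reduce to \eqref{frr+z3k}, \eqref{frr+z4k}, \eqref{frr+z4yz} upon setting $m=[d/2]$ and $2\a=d$. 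Since $\bar f_x$ is continuous at the origin, letting $t\to0$ then produces $\lim_{t\to0}\pi^{-d/2}(\Cal D^{d/2}_{-,2}F_x)(t)=\bar f_x(0)=f(x)$, which is \eqref{nnxxzz}.

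The main obstacle is this last passage to the limit. Theorem \ref{78awqe} only yields equality almost everywhere, whereas \eqref{nnxxzz} asserts a genuine pointwise limit, uniform in $x$. I would handle this through the continuity of $\bar f_x$: applied to $I^{d/2}_{-,2}\bar f_x$, the explicit differential-integral forms of $\Cal D^{d/2}_{-,2}$ reproduce $\bar f_x$ at each of its continuity points, and $\bar f_x$ is continuous on all of $[0,\infty)$. The uniformity of the limit over $x\in\rn$ then rests on the uniform continuity of $f$, via $|\bar f_x(t)-f(x)|\le\intl_{S^{n-1}}|f(x+t\theta)-f(x)|\,d_*\theta\to0$ uniformly in $x$ for $f\in C_\mu(\rn)$. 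The point demanding genuine care is to control, uniformly in $x$, the contribution of the nonlocal operator $I^{1/2}_{-,2}$ that appears when $d$ is odd, so that the tail of $\bar f_x$ does not spoil uniformity as $t\to0$.
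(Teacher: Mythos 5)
Your proposal is correct and takes essentially the same route as the paper's source for this statement: the paper quotes Theorem \ref{invr1abf} from \cite{Ru13b}, whose Funk--Radon--Helgason method rests precisely on your key identity $F_x=\pi^{d/2}\,I^{d/2}_{-,2}\bar f_x$ (the paper's Lemma \ref{L6a} applied to the spherical mean) followed by Erd\'elyi--Kober inversion via Theorem \ref{78awqe}. The one step you flag is indeed what remains to be checked---that for continuous $\bar f_x$ with the decay guaranteed by $\mu>d$ the inversion holds at \emph{every} $t>0$, not merely a.e.---and once that exact identity is in hand, uniformity in $x$ follows from the uniform continuity of $f$ alone, so no separate uniform tail control of $I^{1/2}_{-,2}$ is actually needed.
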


Note that  powers of $t$ in these formulas stand for the corresponding multiplication operators.

The next theorem contains similar results for $L^p$-functions.

 \begin{theorem}\label{invr1p}
A function $f \in L^p (\bbr^n)$,  $1\le p<n/d$, can be recovered from $\vp=R_df$  at almost every  $x\in \rn$ by the  formula
\be\label{nnxxzz}f(x)=(R_d^{-1}\vp)(x) =  \lim\limits_{t\to 0}\, \pi^{-d/2} (\Cal D^{d/2}_{-, 2}F_x)(t),\ee
where the  limit  is understood in the $L^p$-norm.
Here $F_x$ is defined by (\ref{podf}) and $\Cal D^{d/2}_{-, 2}F_x$ is  computed as in Theorem \ref{invr1abf}, where  (\ref{frr+z4yz})
is applicable under the stronger assumption $1\le p<n/(2+2[d/2])$.
\end{theorem}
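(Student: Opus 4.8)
The plan is to reduce the $L^p$ inversion to the one-dimensional inversion of the Erd\'elyi-Kober operator $I^{d/2}_{-,2}$ furnished by Theorem \ref{78awqe}, exactly as in the proof of Theorem \ref{invr1abf}, and then to replace the uniform limit there by an $L^p$-norm limit using the continuity of spherical means in $L^p$. The central identity I would establish is that, for $f\in L^p(\rn)$ with $1\le p<n/d$ and $\vp=R_d f$, the average $F_x$ in (\ref{podf}) satisfies
\be
F_x(t)=\pi^{d/2}\,(I^{d/2}_{-,2}\,g_x)(t),\qquad g_x(r)=\intl_{S^{n-1}} f(x+r\theta)\,d_*\theta ,
\ee
for almost every $x\in\rn$, where $g_x(r)=(M_r f)(x)$ is the spherical mean of $f$ centered at $x$.

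First I would prove this identity by unfolding the definitions. Writing $\vp(\gamma\bbr^d+x+t\gamma e_n)=\intl_{\bbr^d} f(x+\gamma(w+te_n))\,dw$ and integrating over $\gamma\in SO(n)$, the inner rotation average converts $f$ into its spherical mean on the sphere of radius $\sqrt{|w|^2+t^2}$, since $e_n\perp\bbr^d$ gives $|w+te_n|^2=|w|^2+t^2$. Passing to polar coordinates in $\bbr^d$ and substituting $r=\sqrt{\rho^2+t^2}$ yields
\be
F_x(t)=\sig_{d-1}\intl_t^\infty g_x(r)\,(r^2-t^2)^{d/2-1}\,r\,dr ,
\ee
which is the right-hand side of (\ref{ppaawsdz})--(\ref{resex1k}) with $f_0$ replaced by $g_x$; this is Lemma \ref{L6a} applied to the radialization of $f$ about $x$. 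The Fubini interchanges are legitimate once the integrals converge absolutely, which is where $p<n/d$ enters.

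Next I would check that $g_x$ satisfies the hypotheses of Theorem \ref{78awqe}, that is, (\ref{for10z}) with $\alpha=d/2$ for every $a>0$ and almost every $x$. Rewriting the spherical mean as a surface integral gives
\be
\intl_a^\infty |g_x(r)|\,r^{d-1}\,dr\le c\intl_{|y-x|>a}|f(y)|\,|y-x|^{d-n}\,dy ,
\ee
and the kernel $|z|^{d-n}\mathbf{1}_{\{|z|>a\}}$ lies in $L^{p'}(\rn)$ precisely when $p<n/d$; hence the right-hand side is finite for a.e. $x$ by H\"older's inequality. Theorem \ref{78awqe} then gives $g_x(t)=\pi^{-d/2}(\Cal D^{d/2}_{-,2}F_x)(t)$ for a.e. $t>0$ and a.e. $x$, with $\Cal D^{d/2}_{-,2}$ in any of the analytic forms of Theorem \ref{invr1abf}. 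The form (\ref{frr+z4yz}) requires in addition $\intl_a^\infty|g_x(r)|\,r^{2[d/2]+1}\,dr<\infty$; the same surface-integral estimate replaces the kernel exponent $d-n$ by $2[d/2]+2-n$, which lies in $L^{p'}$ exactly when $p<n/(2+2[d/2])$, accounting for the stronger restriction.

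Finally, since $g_x(t)=(M_t f)(x)$, the recovered object is $\pi^{-d/2}(\Cal D^{d/2}_{-,2}F_x)(t)=(M_t f)(x)$ for a.e. $(x,t)$, so the inversion reduces to $L^p$-convergence of spherical means as $t\to0$. This is immediate from Minkowski's integral inequality,
\be
\|M_t f-f\|_p\le \intl_{S^{n-1}}\|f(\cdot+t\theta)-f\|_p\,d_*\theta ,
\ee
combined with the continuity of translations in $L^p$ and dominated convergence, giving $\|M_t f-f\|_p\to0$ and hence the asserted $L^p$-limit formula. The main obstacle is not any single estimate but the a.e.-in-$x$ bookkeeping: one must verify the integrability conditions (\ref{for10z}) and its strengthening, and justify the attendant Fubini steps, so that the pointwise Erd\'elyi-Kober inversion holds for almost every $x$ simultaneously; these are exactly the points at which the sharp thresholds $p<n/d$ and $p<n/(2+2[d/2])$ are forced, and they are what allow the fibrewise identity to be upgraded to the $L^p$-norm statement of the theorem.
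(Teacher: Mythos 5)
The paper never proves Theorem \ref{invr1p} itself---it is imported verbatim from \cite{Ru13b} (``the following theorems were proved in \cite{Ru13b}''), so there is no internal proof to compare against; your proposal in fact reconstructs precisely the method of that reference, the Funk--Radon--Helgason mean-value method. Your argument is correct and essentially identical to it: the identity $F_x(t)=\pi^{d/2}\,(I^{d/2}_{-,2}\,g_x)(t)$ with $g_x$ the spherical mean (Lemma \ref{L6a} applied fibrewise), verification of (\ref{for10z}) (and of its strengthening needed for (\ref{frr+z4yz})) via the H\"older estimate whose kernel integrability forces exactly $p<n/d$ (resp.\ $p<n/(2+2[d/2])$), inversion of the Erd\'elyi--Kober operator by Theorem \ref{78awqe}, and finally $L^p$-continuity of spherical means via Minkowski's integral inequality, with the a.e.\ $(x,t)$ bookkeeping handled as you indicate.
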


\subsection{The Funk-Radon Transforms on Compact Grassmannians}\label{frad}

Let $\gnk$ and $\gnkp$ be a pair of  compact
  Grassmann manifolds of linear subspaces of $\rn$, $0<k<k' <n$. For a function $\Phi$ on $\gnk$, we consider the  Funk-Radon transform\footnote{The subscript "$(n)$" below is intentional. In Section 3 we will be dealing with the similar transform $\F_{(\ell)}$, $\ell <n$, associated with
  Grassmannians  $G_{\ell, k}$ and $G_{\ell, k'}$.}
  \be\label{ku1} (\F_{(n)} \Phi)(\eta)=\intl_{\xi \subset \eta} \Phi(\xi)\, d_\eta \xi, \qquad \eta \in G_{n, k'}\,, \ee
  that integrates $\Phi$ over the set $G_{k}(\eta)$ of all
  $k$-dimensional subspaces of $\eta$ with respect to  the corresponding
  probability measure on $G_{k}(\eta)$.  If $g_{\eta}$ is a
  rotation satisfying $g_{\eta}\bbr^{k'} =\eta$, then
  \be \label{ku2} (\F_{(n)} \Phi)(\eta)=\intl_{G_{k',k}}\Phi(g_{\eta}\xi) \,d\xi.\ee

The dual Funk-Radon transform $\F_{(n)}^* \Psi$ of a
 function $\Psi$ on $G_{n, k'}$ integrates $\Psi$ over the set  of all
  $k'$-dimensional subspaces  $\eta$ containing  the
  $k$-dimensional   subspace $\xi$, namely,
 \be\label{ku3} (\F_{(n)}^* \Psi)(\xi)=\intl_{\eta \supset \xi} \Psi (\eta)\, d_\xi \eta, \qquad \xi \in \gnk. \ee

To give this integral precise meaning, we denote by
$g_{\xi}$ a rotation satisfying $g_{\xi}\bbr^{k}=\xi$  and
let  $K_0\subset SO(n)$ be the isotropy subgroup at
$\bbr^{k}\in \gnk$. Then (\ref{ku3}) is understood as follows:
\be\label{ku4} (\F_{(n)}^* \Psi)(\xi)=\intl_{K_0}\Psi (g_{\xi}\rho \bbr^{k'})\, d\rho
.\ee

\begin{lemma}\label{cri} ${}$\hfill

\noindent {\rm (i)} The Funk-Radon transform $\F_{(n)}$ and its dual $\F_{(n)}^*$ are
linear bounded operators from $L^1 (G_{n, k})$ to $L^1
(G_{n, k'})$, and from $L^1 (G_{n, k'})$ to $L^1 (G_{n,
k})$, respectively.

\noindent {\rm (ii)} The duality relation
\be\label{ksu4} \intl_{G_{n, k'}} (\F_{(n)} \Phi)(\eta)\, \Psi (\eta)\, d\eta=\intl_{G_{n, k}}  \Phi (\xi)\,(\F_{(n)}^* \Psi)(\xi)\, d\xi\ee
holds provided that the integral in either side of this equality is finite when $\Phi$ and $\Psi$ are replaced by $|\Phi|$ and $|\Psi|$, respectively. In particular, for $\Phi \in L^1(G_{n, k})$,
\be\label{ksu41} \intl_{G_{n, k'}} (\F_{(n)} \Phi)(\eta)\, d\eta=\intl_{G_{n, k}}  \Phi (\xi)\, d\xi.\ee
\end{lemma}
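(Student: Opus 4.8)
The plan is to lift both transforms to integrals over the group $G=SO(n)$ and then deduce everything from Tonelli's theorem and the right-invariance of Haar measure. Write $dg$ for the normalized Haar measure on $G$, and let $K_0=\{g\in G:\,g\bbr^{k}=\bbr^{k}\}$ and $H=\{g\in G:\,g\bbr^{k'}=\bbr^{k'}\}$ be the isotropy subgroups, each carrying its normalized Haar measure. By uniqueness of the invariant probability measure on a compact homogeneous space, the canonical measures $d\xi$ on $\gnk$ and $d\eta$ on $\gnkp$ are the push-forwards of $dg$ under $g\mapsto g\bbr^{k}$ and $g\mapsto g\bbr^{k'}$; hence $\intl_{\gnk}\Phi\,d\xi=\intl_{G}\Phi(g\bbr^{k})\,dg$, and likewise for $\gnkp$. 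The same uniqueness, applied to $G_{k',k}$ viewed as a homogeneous space under $H$ (which acts transitively on the $k$-planes of $\bbr^{k'}$), lets me rewrite (\ref{ku2}) and (\ref{ku4}) as the subgroup averages
\be\label{grouprep}
(\F_{(n)}\Phi)(g\bbr^{k'})=\intl_{H}\Phi(gh\bbr^{k})\,dh,\qquad
(\F_{(n)}^*\Psi)(g\bbr^{k})=\intl_{K_0}\Psi(g\rho\bbr^{k'})\,d\rho,
\ee
the integrands depending on $h$ and $\rho$ only through $h\bbr^{k}$ and $\rho\bbr^{k'}$, respectively.

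For part (i), I would estimate, using the first formula in (\ref{grouprep}) and Tonelli,
\[
\intl_{\gnkp}|(\F_{(n)}\Phi)(\eta)|\,d\eta
\le\intl_{H}\Big(\intl_{G}|\Phi(gh\bbr^{k})|\,dg\Big)\,dh.
\]
For each fixed $h$, the substitution $g\mapsto gh^{-1}$ and right-invariance of $dg$ reduce the inner integral to $\intl_{G}|\Phi(g\bbr^{k})|\,dg=\|\Phi\|_{L^1(\gnk)}$, so that $\|\F_{(n)}\Phi\|_{L^1(\gnkp)}\le\|\Phi\|_{L^1(\gnk)}$. The identical argument applied to the second formula in (\ref{grouprep}), now using $g\mapsto g\rho^{-1}$, gives $\|\F_{(n)}^*\Psi\|_{L^1(\gnk)}\le\|\Psi\|_{L^1(\gnkp)}$. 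This proves the asserted boundedness, with operator norm $1$ (equality holds for nonnegative inputs, where Tonelli gives an exact identity).

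For part (ii), the key observation is that $\Psi(g\bbr^{k'})$ is right $H$-invariant while $\Phi(g\bbr^{k})$ is right $K_0$-invariant. Inserting the first formula of (\ref{grouprep}) and using $\Psi(g\bbr^{k'})=\Psi(gh\bbr^{k'})$ for $h\in H$, followed by Fubini (legitimate under the stated finiteness hypothesis, which bounds the corresponding integral of absolute values exactly as in part (i)) and the substitution $g\mapsto gh^{-1}$, I obtain
\[
\intl_{\gnkp}(\F_{(n)}\Phi)(\eta)\,\Psi(\eta)\,d\eta
=\intl_{G}\Phi(g\bbr^{k})\,\Psi(g\bbr^{k'})\,dg.
\]
Running the symmetric computation with the second formula of (\ref{grouprep}), using $\Phi(g\bbr^{k})=\Phi(g\rho\bbr^{k})$ for $\rho\in K_0$, shows that $\intl_{\gnk}\Phi(\xi)\,(\F_{(n)}^*\Psi)(\xi)\,d\xi$ equals the same ``master integral'' over $G$ (which may be read as the integral over the flag manifold of pairs $\xi\subset\eta$); this yields (\ref{ksu4}). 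Finally, (\ref{ksu41}) is the special case $\Psi\equiv1$: since $d\rho$ is a probability measure, (\ref{grouprep}) gives $\F_{(n)}^*1\equiv1$, and the finiteness hypothesis of (\ref{ksu4}) holds because $\intl_{\gnk}|\Phi|\,(\F_{(n)}^*1)\,d\xi=\|\Phi\|_{L^1(\gnk)}<\infty$.

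The main obstacle is concentrated entirely in the preliminary step (\ref{grouprep}): one must correctly identify the canonical probability measures $d_\eta\xi$ and $d_\xi\eta$ with the push-forwards of normalized Haar measure on $H$ and $K_0$, verify that $H$ indeed acts transitively on the $k$-planes of $\bbr^{k'}$ (so that uniqueness of the invariant measure applies), and keep straight which subgroup fixes which coordinate subspace. Once (\ref{grouprep}) is in place, both the boundedness and the duality follow mechanically from Tonelli's theorem and the invariance of Haar measure.
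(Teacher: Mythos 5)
Your proof is correct. Note that the paper itself states Lemma \ref{cri} without proof, presenting it as standard background on the Funk--Radon transform (in the spirit of \cite{GR04, Ru04a}), so there is no in-paper argument to compare against; your derivation --- lifting $\F_{(n)}$ and $\F_{(n)}^*$ to averages over the isotropy subgroups $H$ and $K_0$ via uniqueness of invariant probability measures on compact homogeneous spaces, then using Tonelli/Fubini and right-invariance of Haar measure on $SO(n)$ --- is precisely the standard justification, and it meshes with the paper's own group-theoretic definitions (\ref{ku2}) and (\ref{ku4}). In particular, reducing both sides of (\ref{ksu4}) to the master integral $\int_{SO(n)}\Phi(g\bbr^k)\,\Psi(g\bbr^{k'})\,dg$ (first for $|\Phi|$, $|\Psi|$ to legitimize Fubini, then for the signed functions) correctly handles the finiteness hypothesis, and (\ref{ksu41}) follows from $\F_{(n)}^*1\equiv 1$ exactly as you say.
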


We review some facts from \cite{GR04}   in
our notation. Let $\pk$ be the cone  of positive definite
symmetric $k
 \times k$ matrices $r=(r_{i,j})$. The Siegel gamma function associated to  $\pk$
 is defined by  \be\label{ga1}
\Gam_{k}(\a)=\intl_{\pk} e^{-\text{\rm tr} (r)}  \, \det (r)^{\a -d} dr,
 \ee
\[d= (k+1)/2, \qquad dr=\prod_{i \le j} dr_{i,j},\qquad \mbox{\rm tr} (r)= {\hbox{\rm trace of $r$}}.\] This integral converges
 for all $Re \, \a>d-1$ and represents a product of  usual
 gamma functions:
\be\label{ga2} \Gam_{k}(\a)=\pi^{k(k-1)/4} \,\Gam (\a) \,\Gam \left (\a- \frac
{1}{2}\right) \ldots \Gam \left(\a- \frac {k-1}{2}\right). \ee The
G{\aa}rding-Gindikin fractional integral of a function $f$ on
$\pk$ is  defined by
\be\label{ga3} (\ia f)(r) =\frac {1}{\Gam_{k}(\a)}
\intl_0^r f(s) \, \det (r-s)^{\a-d} ds, \quad Re \, \a > d-1, \ee
where $\int_0^r$ denotes integration over the set $\{s: s \in \pk,
\, r-s \in \pk \}$. We  define a differential operator (in the
$r$-variable):
 \bea \quad
\label{ga4} \qquad D_+ =\det \left ( \eta_{i,j} \, \frac {\partial}{\partial
 r_{i,j}}\right ), \quad
  \eta_{i,j}= \left\{
 \begin{array} {ll} 1  & \mbox{if $i=j$,}\\
 1/2 & \mbox{if $i \neq j,$}
\end{array}
\right. \eea so that \[ D_{+}^m I_{+}^\a f= I_{+}^{\a-m} f, \qquad
m
 \in \bbn, \quad Re \, \a  > m+d-1.\] This equality holds pointwise if  $f$ is
 good enough and is understood  in the sense of distributions, otherwise.

 Let $\xi \in \gnk, \; \eta \in \gnkp, \; 1 \le
 k<k'\le n-1$. Given a function $\Psi$ on $\gnkp$,
 we introduce
 the   mean value operator
 \be\label{ga5}
 (M^\ast_r \Psi)(\xi)= \intl_{O(k)} d u\intl_{ \{
\eta \, :  \; {\rm Cos}^2 (\eta,y)=u^Tru \}} \Psi (\eta) \, dm_{\xi} (\eta),
\quad
 r \in \P_{k}, \ee
where $\Cos^2 (\eta,y) \stackrel {\rm def}{=}y^T\Pr_{\eta}y$,
$y$ is a matrix whose columns form
an orthonormal basis of $\xi$. Changing variables  $y \to y\gam$, $u \to u\gam$ with $\gam \in O(k)$, one can readily see that the right-hand side of (\ref{ga5}) is independent of the choice of the orthonormal basis $y$ of $\xi$.
Detailed explanation of the definition of   the matrix-valued $\Cos$-function
 are given in \cite[Section 3]{GR04} and \cite[p. 156]{Zha1}.

\begin{theorem} \cite[Theorem 1.2]{GR04}\label {ifu}
 Let $\Phi \in L^1(\gnk)$.  Suppose that
 \[
 \Psi (\eta)= (\F_{(n)} \Phi)(\eta), \qquad \eta \in G_{n, k'}, \qquad 1 \le
 k<k'\le n-1,\] and denote \[ \a=\frac{k'
-k}{2}, \qquad \tilde \Psi_{\xi} (r) =\det (r)^{\a -1/2}(M^\ast_r
\Psi)(\xi), \qquad \xi \in \gnk.\]
The operator $\F_{(n)}$ is
injective if and only if \be k+k' \le n. \ee Under this
condition, the function $\Phi$ can be recovered by the formula $\Phi
=\F_{(n)}^{-1}\Psi$, where
\be\label{fa1x}(\F_{(n)}^{-1}\Psi) (\xi)=c \lim\limits_{r \to
I_{k}}^{(L^1)} (D_+^m
 I_+^{m-\a} \tilde \Psi_{\xi}) (r),  \qquad c=\frac
{\Gam_{k}(k/2)}{\Gam_{k}(k'/2)}.
\ee
Here $m$ is an arbitrary integer greater than
$(k'-1)/2$, $I_{k}$, is the identity $k \times k$ matrix, and the
differentiation is understood in the sense of distributions. In
particular, for
 $k'-k=2\ell, \; \ell \in \bbn$, we have
\be\label{fa2x} (\F_{(n)}^{-1}\Psi) (\xi)=c \lim\limits_{r \to
I_{k}}^{(L^1)} (D_+^\ell  \tilde \Psi_{\xi}) (r). \ee
 If $\Phi$ is a continuous function on $\gnk$, then the limit in
 (\ref{fa1x}) and (\ref{fa2x}) can be understood in the $\sup$-norm.
\end{theorem}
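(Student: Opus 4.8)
The plan is to exploit the $O(n)$-equivariance of $\F_{(n)}$ so as to reduce the inversion to a problem in the cone variable $r\in\pk$, governed entirely by the G{\aa}rding--Gindikin fractional integral $I_+^\a$. The crucial link is a mean-value identity asserting that the $M^\ast_r$-average of $\Psi=\F_{(n)}\Phi$ is, up to an explicit determinantal factor, a G{\aa}rding--Gindikin fractional integral of order $\a=(k'-k)/2$ of the corresponding average of $\Phi$. Once this identity is in place, the operator $D_+^m I_+^{m-\a}$ inverts the fractional integration and the passage to the limit $r\to I_k$ recovers $\Phi(\xi)$.

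First I would fix $\xi\in\gnk$, verify (as indicated after (\ref{ga5}), via $y\to y\gam$, $u\to u\gam$ with $\gam\in O(k)$) that $M^\ast_r$ is independent of the frame $y$, and then carry out the central computation: disintegrate
\[
(M^\ast_r \F_{(n)}\Phi)(\xi)=\intl_{O(k)}du\intl_{\{\eta:\,\Cos^2(\eta,y)=u^Tru\}}\Big(\intl_{\sig\subset\eta}\Phi(\sig)\,d_\eta\sig\Big)\,dm_\xi(\eta)
\]
by grouping the $k$-planes $\sig\subset\eta$ according to their own relative position $s=\Cos^2(\sig,y)\in\pk$ with respect to $\xi$. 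The flag geometry $\{\sig\subset\eta\}$ together with the constraint $\Cos^2(\eta,y)=u^Tru$ forces $0\le s\le u^Tru$ in the cone ordering, and the Jacobian of this change of variables should produce exactly the kernel $\det(u^Tru-s)^{\a-d}$ with $d=(k+1)/2$. Integrating out $u\in O(k)$ and absorbing the determinantal weights then yields
\[
\tilde\Psi_\xi(r)=\det(r)^{\a-1/2}(M^\ast_r\Psi)(\xi)=\const\cdot(I_+^\a\tilde\Phi_\xi)(r),
\]
where $\tilde\Phi_\xi$ is the analogous mean of $\Phi$ with $\tilde\Phi_\xi(r)\to\const\cdot\Phi(\xi)$ as $r\to I_k$. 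This disintegration-plus-Jacobian step is the main obstacle; everything afterward is formal fractional calculus.

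With the identity secured, inversion follows from the calculus reviewed after (\ref{ga4}). Choosing $m>(k'-1)/2$ ensures convergence, and by the semigroup property together with $D_+^m I_+^m=\mathrm{id}$ one obtains $D_+^m I_+^{m-\a}\tilde\Psi_\xi=D_+^m I_+^m\tilde\Phi_\xi=\tilde\Phi_\xi$. As $r\to I_k$ the level set $\{\eta:\Cos^2(\eta,y)=u^Tru\}$ collapses to the single condition $\eta\supset\xi$, so $\tilde\Phi_\xi(r)\to\const\cdot\Phi(\xi)$, the normalization working out to $c=\Gam_k(k/2)/\Gam_k(k'/2)$. For $\Phi\in L^1(\gnk)$ the limit is taken in $L^1$ and $D_+$ read distributionally; for continuous $\Phi$ a standard approximate-identity estimate upgrades this to the $\sup$-norm. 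When $k'-k=2\ell$, we have $\a=\ell\in\bbn$, so $m=\ell$ and $I_+^{m-\a}=I_+^0=\mathrm{id}$, giving the local formula (\ref{fa2x}).

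It remains to settle injectivity, and here $k+k'\le n$ enters decisively. Geometrically, $k+k'\le n$ (i.e. $k'\le n-k$) is exactly what guarantees that, with $\xi$ fixed, the matrix cosine $\Cos^2(\eta,y)$ sweeps out a full-dimensional family of values in $\pk$ as $\eta$ ranges over $\gnkp$ --- generic $\eta$ meets $\xi$ transversally --- which makes $M^\ast_r$ non-degenerate near $r=I_k$ and the fractional-integral identity invertible; this gives the \emph{if} direction. When $k+k'>n$ the plane $\eta$ is forced so large that $\Cos^2(\eta,y)$ is confined to a proper subvariety of $\pk$ (for $k'=n-1$ it is merely a rank-one perturbation of $I_k$), the operator $M^\ast_r$ collapses, and nonzero elements of $\ker\F_{(n)}$ appear. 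I would confirm the latter by the spectral route: decomposing $L^2(\gnk)$ into $O(n)$-irreducibles, $\F_{(n)}$ acts on each piece as a ratio of Siegel gamma factors, and precisely when $k+k'>n$ some of these multipliers vanish, exhibiting the kernel and completing the equivalence.
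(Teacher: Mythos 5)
The first thing to note is that the paper you were checked against contains no proof of this statement at all: Theorem \ref{ifu} is imported verbatim from \cite[Theorem 1.2]{GR04} as background for Sections 3 and 4, so the only meaningful comparison is with the proof in that reference. Your strategy is in fact the strategy of \cite{GR04}: fix $\xi$, average $\Psi=\F_{(n)}\Phi$ by $M^\ast_r$, identify $\tilde\Psi_\xi$ (the $\det(r)^{\a-1/2}$-weighted average) as a G{\aa}rding--Gindikin fractional integral $I_+^\a$ of a mean of $\Phi$ that tends to $\Phi(\xi)$ as $r\to I_k$, invert by $D_+^m I_+^{m-\a}$ --- your remark that $m>(k'-1)/2$ ``ensures convergence'' is exactly right, since it amounts to $m-\a>(k-1)/2=d-1$ --- and tie the threshold $k+k'\le n$ to the fact that for $k+k'>n$ every $\eta\in\gnkp$ meets $\xi$ in dimension at least $k+k'-n$, which degenerates $\Cos^2(\eta,y)$ and the operator $M^\ast_r$.

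That said, as a proof your text has a genuine gap, and you name it yourself. The mean-value identity $\tilde\Psi_\xi=\const\cdot I_+^\a\tilde\Phi_\xi$ --- with the kernel $\det(u^Tru-s)^{\a-d}$, the specific exponent $\a-1/2$ in the definition of $\tilde\Psi_\xi$, the limit relation $\tilde\Phi_\xi(r)\to\const\cdot\Phi(\xi)$, and the constant $\Gam_{k}(k/2)/\Gam_{k}(k'/2)$ --- \emph{is} the mathematical content of the theorem; everything after it is, as you say, formal fractional calculus. Writing that the Jacobian of the flag disintegration ``should produce exactly'' this kernel is a statement of the expected answer, not an argument: in \cite{GR04} this step occupies the bulk of the paper (invariant-measure decompositions for the flags $\sig\subset\eta$, Siegel-type beta integrals evaluating both the kernel and the constant, and Fubini/$L^1$ estimates that are what make the a.e.\ and $L^1$-limit assertions legitimate for merely integrable $\Phi$). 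Until that lemma is established, the composition $D_+^m I_+^{m-\a}I_+^\a=D_+^m I_+^m=\mathrm{id}$ (itself needing the distributional framework that the $L^1$ estimates provide) inverts an identity you do not have. A secondary imprecision: in the non-injectivity direction, the clean version of your spectral argument is not that ``some multipliers vanish'' but that for $k+k'>n$ the decomposition of $L^2(\gnkp)$ into $O(n)$-irreducibles admits only signatures with at most $n-k'$ $(<k)$ rows, so the irreducible subspaces of $L^2(\gnk)$ with more rows do not occur in the target and are therefore annihilated by the intertwining operator $\F_{(n)}$; that is what actually exhibits a nonzero kernel.
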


\begin{remark} {\rm Since the Funk-Radon transform and its dual can be expressed one through another, Theorem \ref{ifu} implies the corresponding inversion result for $\F_{(n)}^*$. In general, every  function $\psi$ on $G_{n,\ell}$ can be regarded as a function on $G_{n,n-\ell}$ if we set $(T\psi) (h^\perp)=\psi (h)$, $h\in G_{n,\ell}$. One can readily show (cf. \cite[Lemma 4.3]{Ru04a}) that
\[\intl_{G_{n,n-\ell}} (T\psi) (h^\perp)\, dh^\perp= \intl_{G_{n,\ell}} \psi (h)\, dh,\]
where $dh$ and $dh^\perp$ are the corresponding probability measures.
Thus  $\F_{(n)}^* =T\tilde \F_{(n)}T$, where  $\tilde \F_{(n)}$ is the Funk-Radon transform for a pair of Grassmannians $G_{n, n-k'}$ and $G_{n, n-k}$. It follows that

\be\label{bm1}
(\F_{(n)}^* )^{-1}=T(\tilde \F_{(n)})^{-1}T.
\ee
}
\end{remark}

\section{Inversion of the Radon Transform for a Pair of Affine Grassmannians}

The main idea is to reduce the problem to known inversion formulas in compact and non-compact settings by making use of a certain factorization procedure. Subsection 3.1  is devoted to Radon transforms of the so-called quasi-radial functions satisfying certain symmetry. The general case is considered in Subsection 3.2.

\subsection{Radon Transforms of Quasi-Radial Functions}\label{sfo}
\begin{definition}\label{qr} A function $f: \agnk \to \bbc$ is  called {\it quasi-radial} if $f(\xi, \cdot)$ is a radial function on $\xi^\perp$ for every $\xi\in \gnk$, in other words, if $f(\xi, u)=f_0(\xi, |u|)$ for some function $f_0$ on $\gnk \times \bbr_+$.
\end{definition}

\begin{lemma} \label{liom} If $f(\xi, u)\equiv f_0(\xi, |u|)$ is a quasi-radial function on $\agnk$ satisfying \be\label{kuku05} \intl_{\gnk} d\xi\intl_{a}^{\infty}|f_0(\xi, r)|  \,r^{k'-k-1} \,dr<\infty\,,\quad \forall \,a>0,\ee
 then the Radon transform  $\vp=Rf$, where
 \be\label{lopi}
(Rf)(\eta, v) \equiv  \intl_{\xi \subset \eta} d_\eta \xi
\intl_{\xi^\perp \cap  \eta} f(\xi, v+x) \,dx\ee
 is  quasi-radial too. Moreover, if  $\vp (\eta, v)= \vp_0 (\eta, |v|)$, then
\be\label{be1}
\vp_0 (\eta, s)=\intl_{\xi \subset \eta} g_\xi(s)\, d_\eta \xi, \ee
where
 \be\label{be2}
  g_\xi(s)=\sig_{k'-k-1}\intl_s^\infty f_0 (\xi, r) \, (r^2 -s^2)^{(k'-k)/2 -1} r\, dr.\ee
\end{lemma}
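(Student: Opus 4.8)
The plan is to fix an arbitrary $\eta \in \gnkp$ and a vector $v \in \eta^\perp$, and to evaluate the inner integral in \eqref{lopi} by exploiting the radial symmetry of $f$ in the second variable. For each fixed $k$-plane $\xi \subset \eta$, the integration variable $x$ runs over $\xi^\perp \cap \eta$, which is a linear subspace of dimension $k'-k$. The point $v$ lies in $\eta^\perp \subset \xi^\perp$, so the argument $v+x$ of $f$ has the orthogonal decomposition $v \perp x$, giving $|v+x|^2 = |v|^2 + |x|^2$. Since $f(\xi, \cdot)$ is radial with profile $f_0(\xi, \cdot)$, the inner integral becomes
\be\label{plan1}
\intl_{\xi^\perp \cap \eta} f_0\big(\xi, \sqrt{|v|^2 + |x|^2}\,\big)\, dx,
\ee
which depends on $v$ only through $|v|$. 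First I would observe that this already shows the right-hand side of \eqref{lopi} depends on $v$ only through $s = |v|$, and since the outer $d_\eta \xi$-integral does not reintroduce any $v$-dependence, $\vp = Rf$ is quasi-radial; this justifies writing $\vp(\eta, v) = \vp_0(\eta, |v|)$.

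Next I would compute \eqref{plan1} explicitly as a function of $s = |v|$. Passing to polar coordinates on the $(k'-k)$-dimensional space $\xi^\perp \cap \eta$ and writing $\rho = |x|$, the integral equals
\be\label{plan2}
\sig_{k'-k-1} \intl_0^\infty f_0\big(\xi, \sqrt{s^2 + \rho^2}\,\big)\, \rho^{k'-k-1}\, d\rho.
\ee
The substitution $r = \sqrt{s^2 + \rho^2}$, so that $r\,dr = \rho\, d\rho$ and $\rho^{k'-k-1} = (r^2 - s^2)^{(k'-k-2)/2} = (r^2-s^2)^{(k'-k)/2 - 1}$, with $\rho: 0 \to \infty$ corresponding to $r: s \to \infty$, transforms \eqref{plan2} into exactly
\be\label{plan3}
\sig_{k'-k-1} \intl_s^\infty f_0(\xi, r)\, (r^2 - s^2)^{(k'-k)/2 - 1}\, r\, dr,
\ee
which is the quantity $g_\xi(s)$ in \eqref{be2}. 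Substituting back into the outer integral over $G_k(\eta)$ yields \eqref{be1}, namely $\vp_0(\eta, s) = \intl_{\xi \subset \eta} g_\xi(s)\, d_\eta\xi$.

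The remaining issue — and the main point requiring care rather than a deep obstacle — is the justification that all these manipulations are legitimate, i.e. that the integrals converge and Fubini applies. Here hypothesis \eqref{kuku05} is exactly what is needed: for fixed $\xi$, the inner integral $g_\xi(s)$ in \eqref{plan3} is absolutely convergent for almost every $s$ precisely by Lemma \ref{lifa2}(ii) applied with $\a = (k'-k)/2$, since the convergence condition \eqref{for10z} reads $\intl_a^\infty |f_0(\xi,r)|\, r^{k'-k-1}\, dr < \infty$; and \eqref{kuku05} asserts this together with integrability in $\xi$ after integrating over $\gnk$. I would note that $g_\xi(s)$ is nothing but $\sig_{k'-k-1}$ times an Erd\'elyi-Kober integral $(I^{(k'-k)/2}_{-,2} f_0(\xi,\cdot))(s)$, connecting \eqref{be2} to the fractional-integral machinery of Subsection 2.3, which is what makes the subsequent inversion possible. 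The only mildly delicate step is confirming that the polar-coordinate change and the substitution $r = \sqrt{s^2+\rho^2}$ are valid under absolute convergence and that the order of the $x$-integration and the $d_\eta\xi$-integration may be exchanged; both follow from Tonelli's theorem applied to $|f_0|$ using \eqref{kuku05}.
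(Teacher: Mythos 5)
Your proof is correct and takes essentially the same approach as the paper: where you carry out the polar-coordinate computation explicitly, the paper simply recognizes the inner integral in (\ref{lopi}) as the Radon-John $(k'-k)$-plane transform of the radial function $f(\xi,\cdot)$ over the plane $(\xi^\perp\cap\eta)+v$, at distance $|v|$ from the origin in $\xi^\perp$, and cites Lemma \ref{L6a} (which encapsulates exactly your computation) together with Lemma \ref{lifa2}. One small slip to fix: in your change of variables it is the combination $\rho^{k'-k-1}\,d\rho$, not $\rho^{k'-k-1}$ itself, that equals $(r^2-s^2)^{(k'-k)/2-1}\,r\,dr$; your final displayed formula is nonetheless correct.
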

\begin{proof}
For fixed $\eta$ and $\xi$, the inner integral in (\ref{lopi}) is the Radon-John transform of the function $f(\xi, \cdot): \xi^\perp \to \bbc$ over the
$(k'-k)$-dimensional plane $(\xi^\perp \cap  \eta)+v$ in $\xi^\perp \; (\sim \bbr^{n-k})$. If $f(\xi, \cdot)$ is radial on $\xi^\perp $, the result follows  from Lemmas \ref{L6a} and \ref {lifa2}.
\end{proof}

The right-hand side of (\ref{be1}) is a constant multiple of the tensor product of the Funk-Radon transform (\ref{ku1}) and the Erd\'elyi-Kober-type operator (\ref{eci}).
  Combining Theorems \ref{ifu} and \ref{78awqe}, we obtain the following inversion result.

\begin{theorem} \label{kuku05a} Let $\vp=Rf$, $1\le k<k'\le n-1$, $f(\xi, u)=f_0(\xi, |u|)$, where $f_0$ satisfies (\ref{kuku05}). Suppose that $\vp\equiv \vp_0(\eta,s)\equiv\vp_s(\eta)$; cf.  Lemma \ref {liom}. Then
\bea\label{kuku06}
f_0(\xi, r)\!=\!\pi^{-\a}(D^{\a}_{-, 2}h_\xi)(r), \quad h_\xi(s)\!=\!(\F_{(n)}^{-1}\vp_s)(\xi), \quad \a\!=\!\frac{k'\! -\!k}{2},\eea
where $\F_{(n)}^{-1}$ is defined by (\ref{fa1x}),
\be\label{frr+z3a}
(D^{\a}_{-, 2}h_\xi)(r)=(- D)^{\a} h_\xi , \quad D=\frac {1}{2r}\,\frac {d}{dr},\ee
if $\a$ is an integer, and
 \be\label{frr+z3}  (D^{\a}_{-, 2}h_\xi)(r)=  r\,(- D)^{\a+1/2} r^{2\a}I^{1/2}_{-,2} \,r^{-2\a-1}\,h_\xi,\ee
otherwise\footnote{As in Theorem \ref{78awqe}, powers of $r$ stand for the corresponding multiplication operators.}.
\end{theorem}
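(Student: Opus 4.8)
The plan is to read off a tensor-product factorization from Lemma~\ref{liom} and invert the two commuting factors one at a time. Writing $\alpha=(k'-k)/2$ and recalling $\sigma_{k'-k-1}=2\pi^{\alpha}/\Gamma(\alpha)$, formula (\ref{be2}) says precisely that, for each fixed $\xi\in\gnk$,
\[
g_\xi=\pi^{\alpha}\,I^\alpha_{-,2}f_0(\xi,\cdot),
\]
i.e.\ $g_\xi$ is a constant multiple of the right-sided Erd\'elyi--Kober integral (\ref{eci}) of $f_0(\xi,\cdot)$ taken in the radial variable. On the other hand, comparing (\ref{be1}) with the definition (\ref{ku1}), for each fixed $s$ the slice $\vp_s:\eta\mapsto\vp_0(\eta,s)$ is the Funk--Radon transform of the function $\xi\mapsto g_\xi(s)$ on $\gnk$. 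Since the two operators act on independent variables (the Grassmann variable $\xi$ and the radial variable $s=|v|$), they commute, and the inversion amounts to applying $\F_{(n)}^{-1}$ in $\xi$ followed by $\Cal D^\alpha_{-,2}$ in $s$.

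First I would invert the outer Funk transform. For this I invoke Theorem~\ref{ifu}, which requires that the function $\xi\mapsto g_\xi(s)$ belong to $L^1(\gnk)$ and, for invertibility of $\F_{(n)}$, that $k+k'\le n$; the latter holds whenever $R$ is injective, i.e.\ $k+k'\le n-1$ (Lemma~\ref{L4}). The $L^1(\gnk)$-membership for almost every $s$ follows from the hypothesis (\ref{kuku05}): its exponent $k'-k-1$ equals $2\alpha-1$, so for almost every $\xi$ the function $f_0(\xi,\cdot)$ satisfies (\ref{for10z}) for every $a>0$, and Lemma~\ref{lifa2}(ii) combined with Tonelli's theorem makes $\int_{\gnk}|g_\xi(s)|\,d\xi$ finite for almost all $s$. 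Theorem~\ref{ifu} then gives, for almost every $s$,
\[
g_\xi(s)=(\F_{(n)}^{-1}\vp_s)(\xi)=h_\xi(s),
\]
which is exactly the middle relation in (\ref{kuku06}).

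Next I would invert the inner Erd\'elyi--Kober integral. For almost every fixed $\xi$ we now have $h_\xi=\pi^{\alpha}I^\alpha_{-,2}f_0(\xi,\cdot)$ with $f_0(\xi,\cdot)$ obeying (\ref{for10z}) for all $a>0$; hence Theorem~\ref{78awqe} applies and yields $f_0(\xi,r)=\pi^{-\alpha}(\Cal D^\alpha_{-,2}h_\xi)(r)$ for almost every $r$. Substituting the explicit expressions for $\Cal D^\alpha_{-,2}$ from Theorem~\ref{78awqe} produces the two displayed forms: case~(i) (integer $\alpha$, formula (\ref{90bedr})) gives (\ref{frr+z3a}), while case~(ii), applied with the half-integer $\alpha=(k'-k)/2$ and $2\alpha=k'-k$, gives (\ref{frr+z3}).

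The main obstacle is the measure-theoretic bookkeeping that legitimizes performing the two inversions on separate variables in sequence. Concretely, one must ensure that the intermediate object $g_\xi(s)$ is jointly measurable and almost-everywhere finite, so that the $s$-wise Funk inversion of the previous step produces a genuine function $h_\xi(\cdot)$ of the radial variable to which $\Cal D^\alpha_{-,2}$ can be applied for almost every $\xi$. This is where the precise form of (\ref{kuku05}) is essential: the single integrability condition simultaneously guarantees the $L^1(\gnk)$-bound needed for $\F_{(n)}^{-1}$ and the decay (\ref{for10z}) needed for the Erd\'elyi--Kober derivative, so no further assumptions on $f$ are required and the interchange is justified.
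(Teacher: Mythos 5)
Your proposal is correct and follows essentially the same route as the paper's own proof: read off from Lemma \ref{liom} the factorization $g_\xi=\pi^{\alpha}I^{\alpha}_{-,2}f_0(\xi,\cdot)$, $\vp_s=\F_{(n)}[g_{(\cdot)}(s)]$, check that (\ref{kuku05}) makes Theorem \ref{ifu} applicable (via Tonelli and Lemma \ref{lifa2}(ii), exactly the paper's $L^1(\gnk)$ estimate) and Theorem \ref{78awqe} applicable (via (\ref{for10z}) for a.e.\ $\xi$), and then invert the two factors in sequence. The only difference is expository: you spell out the measure-theoretic bookkeeping and the implicit requirement $k+k'\le n$ for $\F_{(n)}^{-1}$, which the paper's three-line proof leaves tacit.
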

\begin{proof}
To  prove this theorem, it remains to note that both Theorems \ref{ifu} and \ref{78awqe} are applicable owing to  (\ref{kuku05}). For example,
 the assumptions of Theorems \ref{ifu} are satisfied because  $g(\cdot, s)\in L^1(\gnk)$. Indeed,
 \[ \intl_{\gnk} |g(\xi, s)|\, d\xi\le \const \intl_s^\infty (r^2 -s^2)^{\a -1} r\, dr\intl_{\gnk} |f_0(\xi, r)|\, d\xi<\infty\]
 for almost all $s>0$ according to Lemma \ref{lifa2}(ii) and  (\ref{kuku05}).
\end{proof}

\subsection{The  General Case}

 Our approach is inspired by   Gonzalez and Kakehi \cite[pp. 255, 258]{GK03}  who applied the   Fourier transform to $(Rf)(\eta, v)$ in the $v$-variable.  However, the use of the Fourier transform  leads to inevitable restrictions on the class of functions. As we shall see below, these restrictions can be essentially weakened
 if  the Fourier transform is replaced  by the suitable Radon-John transform, the $L^p$ theory of which is well-developed; see, e.g., \cite{Ru04b, Ru13b}.

Let  $\vk$ be an integer satisfying
\be\label{lab3.17}
k\le \vk <n-k'\ee
(the role of $\vk$  will become clear from the reasoning below) and  consider an  auxiliary function
\be\label{fa1}
g_{h,\a}(\xi) \equiv g(h,\xi,\a)=\intl_{\xi^\perp \cap  h} f(\xi, y+\a) \,dy\ee
  on the set of triples
\be\label{fa1pp} \Om =\{ (h,\xi,\a) : \;h\in G_{n,k'+\vk}, \;\xi \in G_{k} (h), \;\a \in h^\perp\}.\ee
 We recall that $f$ is a function on $\gnk$, $0<k<k'<n$, and note that  $\dim (\xi^\perp \cap  h)>0$ because  $\dim (\xi^\perp)+ \dim (h)=n-k+k'+\vk >n$.

Our first step is  to reconstruct $g(h,\xi,\a)$ on $\Om$ from $(Rf)(\eta, v)$. The second step is  to reconstruct
  $f(\xi, u)$ on $\agnk$ from $g(h,\xi,\a)$.

 STEP 1.  We replace the target space $G(n,k')$ by the ``bigger'' Grassmannian $ G (n,k'+\vk)$ of $(k'+\vk)$-dimensional planes
 $\tilde \z\equiv \tilde \z(h,\a)$ in $\rn$ and consider the corresponding Radon transform
 \bea (\tilde Rf)(\tilde \z)&\equiv& (\tilde Rf)(h,\a)=\intl_{\xi \subset h} d_h \xi
\intl_{\xi^\perp \cap  h} f(\xi, y+\a) \,dy\nonumber\\
\label {KIOL} &=&\intl_{G_k (h)} g_{h,\a}(\xi) \, d_h \xi, \quad \tilde \z\in G(n, k'+\vk).\eea

 \begin{lemma}\label{STEP1} ${}$ \hfill

 \noindent {\rm (i)} If
 \be\label{lab3.3}
f \in L^p (\agnk), \qquad  1 \le p < \frac{n-k}{k'-k+\vk}, \ee
then $(\tilde R|f|)(\tilde \z)$ is finite for almost all $\tilde \z \in G(n, k'+\vk)$.
For all such  $\tilde \z \equiv \tilde \z(h,\a)$, we have $g_{h,\a} \in L^1 (G_k(h))$ and for almost all $\eta \in G_{k'}(h)$,
\be\label{lm}
\intl_{\xi \subset \eta} g_{h,\a}(\xi) \, d_\eta \xi=\intl_\pi (Rf)(\eta, v)\,d_\pi v. \ee
Here $\pi=(\eta^\perp \cap h) +\a$ is a $\vk$-dimensional affine plane in $\eta^\perp$ parallel to $h$ and
$Rf$ is the Radon transform (\ref{lopi}).

 \noindent {\rm (ii)} If  $f\in C_\mu (\agnk)$, $\mu >  k'-k +\vk$, then $(\tilde R|f|)(\tilde \z)$ is finite for  all $\tilde \z \in G(n, k'+\vk)$, $g_{h,\a}$ is a continuous function on $G_k(h)$, and (\ref{lm}) holds for  all $\eta \in G_{k'}(h)$.
\end{lemma}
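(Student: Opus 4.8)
The plan is to read off the finiteness assertions directly from Lemma \ref{L2} and to reduce the identity (\ref{lm}) to a single orthogonal decomposition of subspaces together with two applications of Fubini's theorem. First I note that $\tilde R$ is precisely the affine Radon transform (\ref{lab2}) for the pair $(\agnk, G(n,k'+\vk))$, which is admissible since $k\le\vk<n-k'$ forces $1\le k<k'+\vk\le n-1$. Hence Lemma \ref{L2}(i), applied with $k'$ replaced by $k'+\vk$, gives at once that $(\tilde R|f|)(\tilde\z)$ is finite for almost all $\tilde\z$ in case (i) (the bound $p<(n-k)/(k'-k+\vk)$ being exactly the required one) and for every $\tilde\z$ in case (ii) (where $\mu>k'-k+\vk$). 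Whenever $(\tilde R|f|)(h,\a)<\infty$ we have
$$(\tilde R|f|)(h,\a)=\intl_{G_k(h)}\Big(\intl_{\xi^\perp\cap h}|f(\xi,y+\a)|\,dy\Big)d_h\xi<\infty,$$
so the inner integral dominates $|g_{h,\a}(\xi)|$ and yields $g_{h,\a}\in L^1(G_k(h))$; in case (ii) this inner integral converges uniformly in $\xi$ because $\mu>\dim(\xi^\perp\cap h)=k'-k+\vk$, whence $g_{h,\a}$ is continuous on $G_k(h)$.

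The geometric core is the orthogonal decomposition
$$\xi^\perp\cap h=(\xi^\perp\cap\eta)\oplus(\eta^\perp\cap h),\qquad \xi\subset\eta\subset h,$$
valid because $\eta^\perp\cap h\subset\xi^\perp$ (as $\xi\subset\eta$), while for any $w\in\xi^\perp\cap h$ its $\eta$-component $w-w'$, with $w'\in\eta^\perp\cap h$, lies in $\xi^\perp\cap\eta$; the two summands have dimensions $k'-k$ and $\vk$, adding up to $\dim(\xi^\perp\cap h)$. Writing $y=x+z$ with $x\in\xi^\perp\cap\eta$ and $z\in\eta^\perp\cap h$, Fubini splits the defining integral of $g_{h,\a}(\xi)$ as
$$g_{h,\a}(\xi)=\intl_{\eta^\perp\cap h}dz\intl_{\xi^\perp\cap\eta}f(\xi,x+z+\a)\,dx.$$
Integrating over $\xi\in G_k(\eta)$ against $d_\eta\xi$ and interchanging the integrations over $G_k(\eta)$ and over $\eta^\perp\cap h$, the inner double integral becomes $(Rf)(\eta,z+\a)$ by (\ref{lopi}); since $\pi=(\eta^\perp\cap h)+\a$ and $d_\pi v=dz$ under $v=z+\a$, this is precisely (\ref{lm}).

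The point requiring care, and the main obstacle, is to legitimize the two interchanges for almost every (resp. every) $\eta$. The device I would use is the disintegration of the invariant probability measures on subspaces of $h$,
$$\intl_{G_k(h)}F(\xi)\,d_h\xi=\intl_{G_{k'}(h)}d_h\eta\intl_{G_k(\eta)}F(\xi)\,d_\eta\xi,$$
which holds by uniqueness of the $O(h)$-invariant probability measure on $G_k(h)$. Applying it with $F(\xi)=\intl_{\xi^\perp\cap h}|f(\xi,y+\a)|\,dy$ and invoking $(\tilde R|f|)(h,\a)<\infty$, I obtain $\intl_{G_k(\eta)}F(\xi)\,d_\eta\xi<\infty$ for almost all $\eta\in G_{k'}(h)$ in case (i) and for all $\eta$ in case (ii). This absolute convergence is exactly the Tonelli hypothesis underlying both Fubini steps, and along the way it shows that $(Rf)(\eta,z+\a)$ is finite for almost all $z\in\eta^\perp\cap h$, so that the right-hand side of (\ref{lm}) is well defined.
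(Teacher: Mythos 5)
Your proof is correct and follows essentially the same route as the paper's: Lemma \ref{L2}(i) with $k'$ replaced by $k'+\vk$ gives the finiteness and $L^1$ statements, the orthogonal decomposition $\xi^\perp\cap h=(\xi^\perp\cap\eta)\oplus(\eta^\perp\cap h)$ (the paper writes the equivalent three-term decomposition of all of $\xi^\perp$) splits the integral, and the disintegration of the invariant measure on $G_k(h)$ through $G_{k'}(h)$ --- which is exactly the paper's appeal to (\ref{ksu41}) --- supplies the Tonelli hypothesis legitimizing the interchanges. The only cosmetic differences are that the paper runs the computation from the right-hand side of (\ref{lm}) to the left, and treats case (ii) by embedding $C_\mu\subset L^p$ and then invoking continuity of both sides of (\ref{lm}), whereas you verify the Tonelli condition for every $\eta$ directly via the uniform convergence coming from $\mu>k'-k+\vk$.
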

\begin{proof}  {\rm (i)} The first statement follows from  Lemma \ref{L2}(i), in which $k'$ should be replaced by  $k'+\vk$. This also gives
 $g_{h,\a} \in L^1 (G_k(h))$ for almost all $\tilde \z \equiv \tilde \z(h,\a)\in G(n,k'+\vk)$.  To prove (\ref{lm}), we formally have
\be\label{lm5c}
\intl_\pi (Rf)(\eta, v)\,d_\pi v=  \intl_{\xi \subset \eta} d_\eta \xi
\intl_{\xi^\perp \cap  \eta} dx\intl_{\eta^\perp \cap  h} f(\xi, x+v_1+\a) \,dv_1.\ee
Keeping in mind that $\xi \subset \eta$ and therefore $\eta^\perp \subset \xi^\perp$, we observe that every vector $V$ in $\xi^\perp$ can be decomposed as
\[V=\Pr_{h^\perp} V + \Pr_{\xi^\perp \cap  \eta} V + \Pr_{\eta^\perp \cap  h} V.\]
Hence the right-hand side of (\ref{lm5c})  can be written as
\be\label{235}\intl_{\xi \subset \eta} d_\eta \xi\intl_{\xi^\perp \cap  h} f(\xi, y+\a) \,dy=\intl_{\xi \subset \eta} g_{h,\a}(\xi) \, d_\eta \xi,\ee
as desired. To make this reasoning rigorous, we observe that for the integral (\ref{KIOL}) with a non-negative function $f$ satisfying (\ref{lab3.3})
we have \be \label {KIOLa} (\tilde Rf)(h,\a)=\intl_{\eta \subset h}  \left  [ \, \intl_{\xi \subset \eta} d_\eta \xi
\intl_{\xi^\perp \cap  h} f(\xi, y+\a) \,dy\right ]\, d_h \eta<\infty\ee
for almost all $ \tilde \z(h,\a)\in G(n,k'+\vk)$;  cf. (\ref{ksu41}). It follows that the expression in square brackets is finite for almost all  $\eta \in G_{k'}(h)$. But this expression is exactly the left-hand side of (\ref{235}). This completes the proof for the $L^p$- case.

 {\rm (ii)}  If   $f\in C_\mu (\agnk)$ with $\mu >  k'-k +\vk$, then $(\tilde R|f|)(\tilde \z)$ is finite for  all $\tilde \z \in G(n, k'+\vk)$ by Lemma \ref{L2} (i). Moreover,  $f\in  L^p (\agnk)$ whenever
\[\frac{n-k}{\mu} \le p < \frac{n-k}{k'-k+\vk},\]
and therefore all statements in (i) remain true. The continuity of $g_{h,\a}$ follows immediately from its definition (\ref{fa1}).  The validity of (\ref{lm})  for  all $\eta \in G_{k'}(h)$ is a consequence of continuity of all functions in this equality.
\end{proof}

Lemma \ref{STEP1} allows us to reconstruct $g(h,\xi,\a)$  on $\Omega$ from $(Rf)(\eta, v)$.   Indeed, denote
\be \label{lm5c1}  G_{h,\a}(\eta)=\intl_\pi (Rf)(\eta, v)\,d_\pi v, \qquad \pi=(\eta^\perp \cap h) +\a.\ee
By (\ref{lm}),
\be\label{lmn}
\intl_{\xi \subset \eta} g_{h,\a}(\xi) \, d_\eta \xi= G_{h,\a}(\eta), \qquad \eta \in  G_{k'}(h).\ee
The left-hand side is the Funk-Radon transform  of $g_{h,\a}$ for a pair of compact Grassmannians $G_{k}(h)$ and $G_{k'}(h)$. Let
\[ \ell=k'+\vk, \qquad \bbr^\ell=\bbr e_1 \oplus \cdots \oplus \bbr e_\ell, \]
and let $\F_{(\ell)}$ be  the Funk-Radon transform for a pair of compact Grassmannians $G_{\ell,k}$ and $G_{\ell,k'}$; cf.  (\ref{lab2}) for $\ell=n$. If $\gam_h$ is a rotation that takes $\bbr^\ell$ to $h$, then (\ref{lmn}) can be written as
\[ \F_{(\ell)} [g_{h,\a} \circ \gam_h]= G_{h,\a}\circ \gam_h.\]
Because $g_{h,\a} \in L^1 (G_k(h))$, it follows that $g_{h,\a} \circ \gam_h \in L^1 (G_{\ell,k})$, and the operator $\F_{(\ell)}$ can be explicitly inverted by Theorem \ref{ifu}. This gives
\be\label{235b}
g_{h,\a}= \F_{(\ell)}^{-1}[G_{h,\a}\circ \gam_h] \circ \gam_h^{-1},\ee
where $\F_{(\ell)}^{-1}$ is defined by (\ref{fa1x})  with $n$ replaced by $\ell=k'+\vk$.  Thus we have proved the following
\begin{proposition} \label {pwo} Let $0<k<k'<n$, $k\le \vk<n-k'$. If
 $f \in L^p (\agnk)$, $1 \le p < (n-k)/(k'-k+\vk)$, then, for almost all $\tilde \z(h,\a)\!\in \!G(n, k'\!+\!\vk)$, the function $g_{h,\a}$  from (\ref{fa1}) can be expressed through the Radon transform $Rf$ by the formula (\ref{235b}) in which $G_{h,\a}$ is defined by (\ref{lm5c1}). If  $f\in C_\mu (\agnk)$, $\mu >  k'-k +\vk$, the  above statement holds for all $\tilde \z(h,\a)\!\in \!G(n, k'\!+\!\vk)$.
\end{proposition}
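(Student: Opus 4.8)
The plan is to assemble the proposition directly from Lemma \ref{STEP1} together with the inversion of the Funk-Radon transform furnished by Theorem \ref{ifu}; no new estimates are needed beyond those already established. First I would invoke Lemma \ref{STEP1}(i) in the $L^p$ case (respectively Lemma \ref{STEP1}(ii) in the $C_\mu$ case) to secure simultaneously two facts: that $g_{h,\a}\in L^1(G_k(h))$ for almost all $\tilde\z(h,\a)\in G(n,k'+\vk)$ (respectively for all such $\tilde\z$), and that the reconstruction identity (\ref{lm}) holds. With the notation $G_{h,\a}$ of (\ref{lm5c1}), identity (\ref{lm}) is exactly (\ref{lmn}), whose left-hand side is the Funk-Radon transform of $g_{h,\a}$ for the pair of compact Grassmannians $G_k(h)$ and $G_{k'}(h)$ inside the $\ell$-dimensional space $h$, where $\ell=k'+\vk$.

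Next I would transport this intrinsic transform into standard position: choosing a rotation $\gam_h$ with $\gam_h\bbr^\ell=h$, equation (\ref{lmn}) becomes $\F_{(\ell)}[g_{h,\a}\circ\gam_h]=G_{h,\a}\circ\gam_h$ on $G_{\ell,k'}$. The conceptual key — and the reason for the lower bound imposed on $\vk$ in (\ref{lab3.17}) — is that Theorem \ref{ifu} renders $\F_{(\ell)}$ injective and explicitly invertible precisely when $k+k'\le\ell$; since $\ell=k'+\vk$, this threshold is equivalent to $\vk\ge k$, which is exactly the left inequality of (\ref{lab3.17}). (The upper bound $\vk<n-k'$ plays a complementary role, guaranteeing that the admissible range $1\le p<(n-k)/(k'-k+\vk)$ is nonempty and that the auxiliary integral in (\ref{fa1}) runs over a positive-dimensional plane.) Because $g_{h,\a}\circ\gam_h\in L^1(G_{\ell,k})$, Theorem \ref{ifu} applies and yields $g_{h,\a}\circ\gam_h=\F_{(\ell)}^{-1}[G_{h,\a}\circ\gam_h]$; composing with $\gam_h^{-1}$ produces the asserted formula (\ref{235b}).

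The genuinely delicate analytic work has already been discharged inside Lemma \ref{STEP1}, namely the rigorous passage from the formal Fubini computation (\ref{lm5c}) to the identity (\ref{235}) via the orthogonal decomposition $V=\Pr_{h^\perp}V+\Pr_{\xi^\perp\cap\eta}V+\Pr_{\eta^\perp\cap h}V$ and the finiteness supplied by (\ref{KIOLa}). Consequently, for the proposition itself I expect the main obstacle to be purely a matter of bookkeeping: keeping the almost-everywhere quantifier honest. One must check that the full-measure set of $\tilde\z(h,\a)$ on which $g_{h,\a}\in L^1(G_k(h))$ and (\ref{lmn}) hold is the same set on which the $L^1$-inversion (\ref{235b}) is then asserted, so that Theorem \ref{ifu} is applied legitimately fibrewise over $h$ and $\a$. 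In the continuous case this difficulty disappears entirely, since Lemma \ref{STEP1}(ii) delivers continuity of $g_{h,\a}$ and validity of (\ref{lmn}) for every $\tilde\z(h,\a)$, whereupon Theorem \ref{ifu} reconstructs $g_{h,\a}$ in the sup-norm.
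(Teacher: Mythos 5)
Your proposal is correct and is essentially the paper's own argument: Lemma \ref{STEP1} supplies $g_{h,\a}\in L^1(G_k(h))$ together with the identity (\ref{lmn}), the rotation $\gam_h$ turns this into $\F_{(\ell)}[g_{h,\a}\circ\gam_h]=G_{h,\a}\circ\gam_h$, and Theorem \ref{ifu} with $n$ replaced by $\ell=k'+\vk$ inverts $\F_{(\ell)}$, which is exactly how (\ref{235b}) is derived in the text. Your observation that $\vk\ge k$ is precisely the injectivity threshold $k+k'\le \ell$ of Theorem \ref{ifu} correctly makes explicit what the paper leaves implicit (one tiny quibble: the positive-dimensionality of $\xi^\perp\cap h$ follows from $k'+\vk>k$ alone, while the upper bound $\vk<n-k'$ serves to keep $k'+\vk\le n-1$ and the $p$-range nonempty).
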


STEP 2. Our next aim is  to reconstruct
  $f (\t)\equiv f(\xi, u)$ on $\agnk$ from $g(h,\xi,\a)$. To this end, we  fix  $\xi \in \gnk$ and choose a plane
$\tilde \z(h,\a)\in G (n,k'+\vk)$, so that $h\supset\xi$  and $g(h,\xi,\a)$ can be found by Step 1.  Every  triple $(h,\xi,\a)$ obtained this way belongs to the set (\ref{fa1pp}) and determines a plane $\om$ in  $\xi^\perp$ so that
\be\label{k1} \om=(h\cap \xi^\perp)+\a, \qquad \dim (\om)= k' -k +\vk  \overset{\rm def} {=}k_1.\ee

Note that  every $k_1$-plane $\om$ in  $\xi^\perp$ is uniquely represented in the form (\ref{k1}) with some $h \in G_{n, k'+\vk}$ containing $\xi$ and some $\a\in  h^\perp$.
Indeed, we can write $\om =\om_0 +\a$, where  $\om_0\in G_{k_1}(\xi^\perp)$ is parallel to $\om$ and $\a\in \om_0^\perp \cap \xi^\perp$.
 It remains to set $h=\om_0\oplus \xi$ and note that $h^\perp=(\om_0\oplus \xi)^\perp=\om_0^\perp \cap \xi^\perp \ni \a$.

Now we set $f_\xi (u)\equiv f(\xi, u)$ and  write (\ref{fa1}) as
\be\label{fa14}
g(h,\xi,\a) \equiv \intl_{\xi^\perp \cap  h} f_\xi (y+\a) \,dy \overset{\rm def} {=}G_\xi (\om),\qquad \om=(h\cap \xi^\perp)+\a.\ee
 This is the Radon-John $k_1$-plane transform $(R_{k_1} f_\xi)(\om)$
of the function $f_\xi \equiv f(\xi, \cdot)$ defined on $\xi^\perp$.
 The operator $R_{k_1}$ can be explicitly inverted by making use of Theorems \ref{invr1abf} and
  \ref{invr1p} adjusted to our case. Specifically,
 we replace the ambient space $\rn$ in (\ref{nnxxzz})   by $\xi^\perp$, $d$ by $k_1$, $f(x)$ by $f_\xi (u)$, and the average $F_x (t)$  (cf. (\ref{podf})) by
  \be\label{podfq}
F_{\xi,u} (t)= \intl_{SO(\xi^\perp)} \!  G_\xi (\gam \om_{\xi, t} +u) \,
 d\gam. \ee
 Here $SO(\xi^\perp)\subset SO(n)$ is the isotropy subgroup of $\xi^\perp$, $\om_{\xi, t}$ is an arbitrary $k_1$-plane in $\xi^\perp$ at distance $t>0$ from the origin.

 The above reasoning  leads to the following statement.
 \begin{proposition} \label {pwo1}  Let $0<k<k'<n$, $k\le \vk<n-k'$. If
 $f \in L^p (\agnk)$, $1 \le p < (n-k)/(k'-k+\vk)$, then for almost all $\t \equiv \t(\xi, u)\in \agnk$, the function
 $f(\t)\equiv f_\xi (u)$ can be reconstructed from $g(h,\xi,\a)\equiv G_\xi (\om)$,  $\om=(h\cap \xi^\perp)+\a \in G(\xi^\perp, k_1)$, by the formula
 \be\label{nnjj} f_\xi (u)=  \lim\limits_{t\to 0}\, \pi^{-k_1/2} (\Cal D^{k_1/2}_{-, 2}F_{\xi,u})(t),\ee
where,  for almost all $\xi \in \gnk$, the  limit  is understood in the $L^p (\xi^\perp)$-norm.
Here $F_{\xi,u}$ is defined by (\ref{podfq}) and $\Cal D^{k_1/2}_{-, 2}F_{\xi,u}$ is  computed as in Theorem \ref{invr1abf}. If  $f\in C_\mu (\agnk)$, $\mu >  k'-k +\vk$, the  above statement holds for all $\xi \in \gnk$ and the  limit in (\ref{nnjj})  is understood in the $sup$-norm on $\xi^\perp$.
\end{proposition}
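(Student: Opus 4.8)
The plan is to read the defining relation (\ref{fa14}) at face value: for each fixed subspace $\xi\in\gnk$ the map $\om\mapsto G_\xi(\om)$ is exactly the Radon-John $k_1$-plane transform $(R_{k_1}f_\xi)(\om)$ of the single function $f_\xi\equiv f(\xi,\cdot)$ on the $(n-k)$-dimensional Euclidean space $\xi^\perp$, and the parametrization (\ref{k1}) established just before (\ref{fa14}) shows that $\om$ ranges over \emph{all} of $G(\xi^\perp,k_1)$. Since $\xi^\perp$ is isometric to $\bbr^{n-k}$ and both the $k_1$-plane transform and its inversion commute with such an isometry, the task reduces to inverting $R_{k_1}$ on $\bbr^{n-k}$. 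I would therefore apply Theorems \ref{invr1abf} and \ref{invr1p} after replacing $\rn$ by $\xi^\perp$, $d$ by $k_1=k'-k+\vk$, $f$ by $f_\xi$, and the rotational average $F_x$ of (\ref{podf}) by $F_{\xi,u}$ of (\ref{podfq}). This last replacement is exact: as $\gam$ runs over $SO(\xi^\perp)$ the plane $\gam\om_{\xi,t}$ sweeps out precisely the family of $k_1$-planes in $\xi^\perp$ at distance $t$ from the origin that appears in (\ref{podf}), with the translation by $u$ in the role of $x$.

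Before invoking those theorems I must confirm that their hypotheses transfer. First, the upper bound $\vk<n-k'$ is exactly what makes $0<k_1<n-k$, so $R_{k_1}$ is a bona fide transform over proper $k_1$-planes in the ambient $(n-k)$-space; this is the role of that side condition. In the $L^p$ setting I would disintegrate the product measure $d\t=d\xi\,du$ by Fubini: from
\[
\intl_{\gnk}\Big(\intl_{\xi^\perp}|f(\xi,u)|^p\,du\Big)\,d\xi=\|f\|_{L^p(\agnk)}^p<\infty
\]
the inner integral is finite for almost every $\xi$, so $f_\xi\in L^p(\xi^\perp)$ for a.e. $\xi$; and the admissible range $1\le p<(n-k)/(k'-k+\vk)=(n-k)/k_1$ is precisely the hypothesis of Theorem \ref{invr1p} for $d=k_1$ in dimension $n-k$ (with the sharper form (\ref{frr+z4yz}) requiring, as there, the correspondingly stronger exponent bound). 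In the continuous setting, $f\in C_\mu(\agnk)$ with $\mu>k'-k+\vk$ gives, since $|\t|=|u|$, the uniform decay $f_\xi(u)=O(|u|^{-\mu})$ with $\mu>k_1$, i.e. $f_\xi\in C_\mu(\xi^\perp)$ for \emph{every} $\xi$, which is the hypothesis of Theorem \ref{invr1abf}. With these checks in place, the cited theorems yield (\ref{nnjj}) together with the stated forms of $\Cal D^{k_1/2}_{-,2}$ and the two convergence modes: the $L^p(\xi^\perp)$-norm limit for a.e. $\xi$, and the sup-norm limit for every $\xi$.

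The genuinely delicate point — and what I expect to be the main obstacle — is not any single estimate but the measure-theoretic bookkeeping that lets Step 1 feed Step 2. The data $G_\xi(\om)$ are furnished by Proposition \ref{pwo} only for almost all $\tilde\z(h,\a)$, so I must verify that, after disintegrating over $\xi$, the set of planes $\om\in G(\xi^\perp,k_1)$ on which $G_\xi$ is known still has full measure for almost every $\xi$, and that this exceptional $\xi$-null set is compatible with the $\xi$-null set governing the $L^p$-convergence in (\ref{nnjj}). Once the null sets are reconciled (the continuous case is free of this, everything holding for all $\xi$), reconstructing $f$ almost everywhere on $\agnk$ is immediate from the reconstruction of each $f_\xi$.
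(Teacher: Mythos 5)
Your proposal is correct and follows essentially the same route as the paper: identify $G_\xi(\om)=(R_{k_1}f_\xi)(\om)$ as the Radon-John $k_1$-plane transform on $\xi^\perp$ via the unique representation $\om=(h\cap\xi^\perp)+\a$, then invert by Theorems \ref{invr1abf} and \ref{invr1p} with $\rn\to\xi^\perp$, $d\to k_1$, and $F_x\to F_{\xi,u}$. The extra checks you supply (Fubini giving $f_\xi\in L^p(\xi^\perp)$ for a.e.\ $\xi$, the decay transfer in the $C_\mu$ case, and the role of $\vk<n-k'$) are left implicit in the paper, and the null-set bookkeeping you flag belongs to the combination of the two steps in Theorem \ref{mt3} rather than to this proposition itself.
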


Combining Propositions  \ref{pwo} and  \ref{pwo1},  we obtain the following result.

\begin{theorem} \label{mt3} Let $0<k<k'<n$, $k\le \vk<n-k'$. Suppose that
\[  f\in C_\mu (\agnk), \qquad \mu >  k'-k +\vk,\]
or
\[   f \in L^p (\agnk), \qquad 1 \le p < (n-k)/(k'-k+\vk).\]
Then
 $f(\t)\equiv f_\xi (u)$ can be reconstructed for all or almost all $\t \equiv \t(\xi, u)$, respectively, by the formulas (\ref{nnjj}), (\ref{podfq}), and (\ref{fa14}), where  $g(h,\xi,\a) \equiv g_{h,\a} (\xi)$ is determined by Proposition  \ref{pwo}.
\end{theorem}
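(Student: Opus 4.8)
The plan is to prove Theorem \ref{mt3} by simply concatenating the two reconstruction steps already isolated in Propositions \ref{pwo} and \ref{pwo1}, since the hypotheses imposed on $f$ in the theorem coincide verbatim with those of both propositions. First I would record how the double inequality $k\le \vk<n-k'$ governs the two steps. The lower bound $k\le \vk$ guarantees $k+k'\le \ell:=k'+\vk$, which is precisely the injectivity threshold for the Funk-Radon transform $\F_{(\ell)}$ entering STEP 1 (Theorem \ref{ifu}). The upper bound $\vk<n-k'$ is equivalent to $k_1:=k'-k+\vk<n-k=\dim \xi^\perp$, which is exactly what makes the Radon-John $k_1$-plane transform of STEP 2 a transform over \emph{proper} subplanes of $\xi^\perp$, hence invertible by Theorems \ref{invr1abf} and \ref{invr1p}. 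With these roles in hand, the theorem is the composition of the two inversions.

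For STEP 1 I would invoke Proposition \ref{pwo} directly: forming $G_{h,\a}(\eta)$ by the partial integration (\ref{lm5c1}) of $Rf$ over the $\vk$-plane $\pi=(\eta^\perp\cap h)+\a$, the identity (\ref{lmn}) exhibits $G_{h,\a}$ as the Funk-Radon transform of $g_{h,\a}\in L^1(G_k(h))$, and Theorem \ref{ifu}, with $n$ replaced by $\ell$, inverts it to recover $g(h,\xi,\a)$ for almost all $\tilde\z(h,\a)\in G(n,\ell)$ in the $L^p$ case and for all such $\tilde\z$ in the continuous case. For STEP 2 I would invoke Proposition \ref{pwo1}: after the bijective reparametrization (\ref{k1}) of a triple $(h,\xi,\a)$ by the $k_1$-plane $\om=(h\cap\xi^\perp)+\a$ in $\xi^\perp$, the quantity $g(h,\xi,\a)=G_\xi(\om)$ is exactly the Radon-John $k_1$-plane transform of $f_\xi=f(\xi,\cdot)$ on $\xi^\perp\cong\bbr^{n-k}$. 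Inverting it via (\ref{nnjj})--(\ref{podfq}) then yields $f_\xi(u)$. Here one checks that the admissibility ranges match: $(n-k)/k_1=(n-k)/(k'-k+\vk)$ is exactly the stated $L^p$-range and $\mu>k_1$ is exactly the stated decay, while for the variant (\ref{frr+z4yz}) one uses instead $1\le p<(n-k)/(2+2[k_1/2])$.

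The main obstacle, and the only point requiring genuine care, is the measure-theoretic compatibility of the two ``almost everywhere'' clauses under composition. Proposition \ref{pwo} delivers $g$ only for almost every $(h,\a)\in G(n,\ell)$, whereas Proposition \ref{pwo1} requires $G_\xi(\om)$ for almost every $k_1$-plane $\om\subset\xi^\perp$, and this for almost every $\xi\in\gnk$. I would reconcile these through the bijection (\ref{k1}) together with Fubini's theorem applied to the product measure $d\t=d\xi\,du$: from $\|f\|_{L^p(\agnk)}^p=\intl_{\gnk}\|f_\xi\|_{L^p(\xi^\perp)}^p\,d\xi<\infty$ one obtains $f_\xi\in L^p(\xi^\perp)$ for almost all $\xi$, whence $R_{k_1}f_\xi$ is finite a.e.\ on $G(\xi^\perp,k_1)$ for those $\xi$, and the exceptional $(h,\a)$-set of STEP 1 pushes forward under (\ref{k1}) to a null set of $\om$'s for almost every $\xi$. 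In the continuous case this difficulty evaporates, since every function in the chain is defined and the formulas hold pointwise, so the reconstruction is valid for all $\t(\xi,u)$. Assembling the two steps then yields the asserted formulas (\ref{nnjj}), (\ref{podfq}), (\ref{fa14}), with $g(h,\xi,\a)\equiv g_{h,\a}(\xi)$ supplied by Proposition \ref{pwo}, completing the proof.
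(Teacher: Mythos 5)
Your proposal is correct and follows essentially the same route as the paper, whose entire proof consists of combining Propositions \ref{pwo} and \ref{pwo1} exactly as you do. The extra points you supply --- the arithmetic showing that $k\le\vk$ gives the injectivity threshold $k+k'\le\ell$ for $\F_{(\ell)}$, that $\vk<n-k'$ gives $k_1<\dim\xi^\perp$, and the Fubini argument gluing the two ``almost everywhere'' clauses through the parametrization (\ref{k1}) --- are details the paper leaves implicit, and they are handled correctly.
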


\begin{remark} In the case $\vk=k$, Theorem \ref{mt3}  holds under the assumptions $1 \le p < (n-k)/k'$ and $\mu >  k'$.
\end{remark}

\section{Inversion of the Dual Radon Transform for a Pair of Affine Grassmannians}

Let us consider the dual Radon transform (\ref{lab5}) which is well defined for every locally integrable function $\vp$ on $\agnkp$ and represents a  locally integrable function  on $\agnk$  by the formula
\be\label{lab64} (R^*\vp)(\t) \equiv (R^*\vp)(\xi, u)=
\intl_{SO(n-k)} \vp(g_\xi \rho \rkp +u) \, d\rho; \ee
see Lemma \ref{L2}.
Our aim  is to reconstruct $\vp$ from $R^*\vp$.

\subsection{The Dual Radon Transform of Quasi-Radial Functions}

In this subsection we consider the case  when $\vp$ is a locally integrable quasi-radial function on $\agnkp$.
By Definition \ref{qr}, there exists a function $\vp_0\equiv\vp_0(\eta, t)$  on $\gnkp\times \bbr_+$ such that
$\vp(\eta, v)=\vp_0(\eta, |v|)$  and
\be\label{lab64x1} \intl_0^a t^{n-k' -1}\, dt\intl_{\gnkp} |\vp_0(\eta, t)|\, d\eta <\infty \quad \forall \, a>0.\ee

Our aim is to  show that averaging $(R^*\vp)(\xi, u)$ over all $u\in\xi^\perp$ at a distance $r$ from the origin
yields a decomposition of $R^*\vp$ into a tensor product of two invertible operators, namely, the Erd\'elyi-Kober type
operator and the dual Funk-Radon transform for a pair of compact Grassmannians.
We will be using the same  notations for the coordinate planes as in (\ref{mo1})-(\ref{mo3}).

Let $g_\xi\in SO(n)$ be a rotation  satisfying  $g_\xi \bbr^k =\xi$. We set
\be
(M_{\xi}f)(r)=\intl_{SO(n-k)}(f\circ g_{\xi})(\rk, r\g e_{n})\,d\g, \qquad r>0,
\ee
where $SO(n-k)$ is the isotropy subgroup  of the coordinate plane $\bbr^{n-k}$; see (\ref{mo2}).
Using (\ref{lab64}) and setting $\vp_\xi =\vp\circ g_\xi$, we formally obtain
\bea\label{m1}
(M_{\xi}R^*\vp)(r)&=&\intl_{SO(n-k)} ( R^*\vp \circ g_\xi)(\rk, r\g e_{n})\,d\g\nonumber\\
&=&\intl_{SO(n-k)} d\g\intl_{SO(n-k)} \vp_\xi (\rho \rkp +  r\g e_{n})\, d\rho.\nonumber\eea
This gives
\bea
(M_{\xi}R^*\vp)(r)&=&\intl_{SO(n-k)} d\g\intl_{SO(n-k)} \vp_\xi (\rho \rkp,  r\Pr_{\rho \rnkp}\g e_{n})\, d\rho\nonumber\\
&=&\intl_{SO(n-k)} d\rho\intl_{SO(n-k)} \vp_\xi (\rho \rkp,  r\rho\Pr_{\rnkp}\rho^T\g e_{n})\, d\g\nonumber\\
&=&\frac{1}{\sig_{n-k}}\intl_{SO(n-k)} d\rho \intl_{S^{n-k-1}}\vp_\xi (\rho \rkp,  r\rho\Pr_{\rnkp}\sig)\,d\sig,\nonumber
\eea
where $S^{n-k-1}$ denotes the unit sphere in  $\bbr^{n-k}$ and $\Pr_{\rnkp}$ stands for the orthogonal projection onto $\rnkp$. Passing to bi-spherical coordinates
\be \sig=\a \cos \psi +\b \sin\psi, \qquad \a\in S^{k'-k-1}, \quad \b\in S^{n-k'-1},\ee
where $ S^{k'-k-1}$ and $S^{n-k'-1}$ denote the unit spheres in $\rkk$ and $\rnkp$, respectively (see, e.g. \cite[Lemma 1.38]{Ru15}), we continue
\bea
(M_{\xi}R^*\vp)(r)&=&\frac{1}{\sig_{n-k}}\intl_{SO(n-k)} d\rho \intl_0^{\pi/2}\cos^{k'-k-1} \psi \, \sin^{n-k'-1} \psi\, d\psi\nonumber\\
&\times&\intl_{S^{k'-k-1}} \,d\a
\intl_{S^{n-k'-1}}\vp_\xi (\rho \rkp,  (\rho\b) r\sin\psi)\,d\b\nonumber\\
&=&\frac{\sig_{k'-k-1}\sig_{n-k'-1}}{\sig_{n-k}}\intl_0^1 s^{n-k'-1}(1-s^2)^{(k'-k)/2-1}\,ds\nonumber\\
\label{m1}&\times&\intl_{SO(n-k)}\vp_\xi (\rho \rkp,  (\rho e_n) \,rs)\,d\rho.
\eea
Changing variables, we obtain
\bea
(M_{\xi}R^*\vp)(r)&=&
\frac{\sig_{k'-k-1}\sig_{n-k'-1}}{r^{n-k-2}\,\sig_{n-k}}\intl_0^r t^{n-k'-1} (r^2-t^2)^{(k'-k)/2-1}\,dt\nonumber\\
\label{m3} &\times&\intl_{\eta\supset\xi}\vp(\eta, t g_\eta e_n) \,d_\xi\eta,\eea
 $g_\eta$ being an orthogonal transformation that takes $\bbr^{k'}$ to $\eta$.
 Since $\vp$ is quasi-radial, it follows that $\vp(\eta, t g_\eta e_n)\equiv \vp_0 (\eta, t)$. Then the integral in  (\ref{m3}) is the dual Funk-Radon transform (\ref{ku3}) of  $\vp_0 (\cdot, t)$ and we can write
\be\label{am3}
(M_{\xi}R^*\vp)(r)= \frac{c}{r^{n-k-2}}\,(\F_{(n)}^* \Psi_r)(\xi), \qquad c=\frac{\pi^{(k'-k)/2}\,\sig_{n-k'-1}}{\sig_{n-k}},\ee
\be\label{am3xd}
\Psi_{r}(\eta)=(I_{+,2}^{(k'-k)/2}  [\psi_{(\cdot)}(\eta)])(r), \qquad \psi_t(\eta)= t^{n-k'-2} \vp_0 (\eta, t).\ee

To make the formal derivation of (\ref{am3}) rigorous, we need to show that  the right-hand side of  (\ref{am3}) exists in the Lebesgue sense when $\vp$ is replaced by  $|\vp|$. To this end, it suffices to assume that $\vp(\eta, v)\equiv\vp_0(\eta, |v|)$ is nonnegative and prove the following statements.

(a) The function $t \to t\psi_{t}(\eta)$ is locally integrable on $\bbr_+$   for almost all $\eta \in \gnkp$;

(b) $\Psi_{r}\in L^1 (\gnkp)$ for almost all $r>0$.

The statement (a) follows from (\ref{lab64x1}). To prove (b), owing to (\ref{am3xd}), we have
\be\label{anxd} \intl_{\gnkp} \Psi_{r}(\eta)\, d\eta=(I_{+,2}^{(k'-k)/2} \tilde\psi)(r),\ee
where $\tilde \psi(t)=t^{n-k'-2} \int_{\gnkp} \vp_0 (\eta, t)\, d\eta$. By (\ref{lab64x1}),  $t\tilde \psi(t)$ is locally integrable on $\bbr_+$ and therefore, by Lemma \ref{lifa2}(i), the expression (\ref{anxd}) is finite for almost all $r>0$. This gives (b).

The  equalities (\ref{am3}) and (\ref{am3xd}) reduce the inversion problem for $R^*\vp$ to inversion of $\F_{(n)}^*$ and $I_{+,2}^{(k'-k)/2}$.
\begin{theorem}  Let $f=R^* \vp$, where
$\vp\equiv \vp_0 (\eta, |v|)$ is a locally  integrable quasi-radial function on
 $\agnkp$, $0<k<k'<n$, $k+k'\ge n$. Then  for almost all $\eta \in \gnkp$ and  almost all $t>0$,
\bea
 \vp_0 (\eta, t)&=&c^{-1} t^{k'+2 -n}(\Cal D_{+,2}^{(k'-k)/2} [\Psi_{(\cdot)}(\eta)])(t), \nonumber\\
 \Psi_{r}(\eta)&=& r^{n-k-2} ((\F_{(n)}^*)^{-1} [(M_{(\cdot)}f)(r)])(\eta),\nonumber\eea
where $c$ is a constant from (\ref{am3}),  $\Cal D_{+,2}^{(k'-k)/2}$ and $(\F_{(n)}^*)^{-1}$ being defined by  (\ref{frr+z})  and  (\ref{bm1}), respectively.
\end{theorem}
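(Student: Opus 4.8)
The plan is to read off the inversion directly from the two factorization identities (\ref{am3}) and (\ref{am3xd}), which have already been derived and justified in the Lebesgue sense for nonnegative $\vp$ via statements (a) and (b); for a general complex-valued locally integrable quasi-radial $\vp$ the same identities persist by splitting $\vp$ into positive and negative parts, since (\ref{lab64x1}) guarantees absolute convergence of every integral involved. Thus $R^*$, restricted to quasi-radial functions and followed by the averaging $M_\xi$, factors as the composition of the dual Funk-Radon transform $\F_{(n)}^*$ acting in the Grassmann variable and the Erd\'elyi-Kober integral $I_{+,2}^{(k'-k)/2}$ acting in the radial variable. Because these two operators act on independent variables, I would invert them one at a time.

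First I would invert $\F_{(n)}^*$. By the Remark following Theorem \ref{ifu}, $\F_{(n)}^*=T\tilde\F_{(n)}T$ with $\tilde\F_{(n)}$ the Funk-Radon transform for the pair $G_{n,n-k'}$, $G_{n,n-k}$; applying the injectivity criterion of Theorem \ref{ifu} with indices $n-k'$ and $n-k$ gives injectivity precisely when $(n-k')+(n-k)\le n$, i.e. $k+k'\ge n$, which is our hypothesis. Hence $\F_{(n)}^*$ is invertible and $(\F_{(n)}^*)^{-1}$ is given by (\ref{bm1}). Rewriting (\ref{am3}) as $(\F_{(n)}^*\Psi_r)(\xi)=c^{-1}r^{n-k-2}(M_\xi f)(r)$ and noting that $\Psi_r\in L^1(\gnkp)$ for almost all $r$ by statement (b), I would apply $(\F_{(n)}^*)^{-1}$ for each such fixed $r$ to recover $\Psi_r$ up to the placement of the constant $c$; absorbing $c$ into $\Psi_r$ yields the second displayed formula of the theorem.

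Second I would invert the radial operator. Identity (\ref{am3xd}) reads $\Psi_r(\eta)=(I_{+,2}^{(k'-k)/2}[\psi_{(\cdot)}(\eta)])(r)$ with $\psi_t(\eta)=t^{n-k'-2}\vp_0(\eta,t)$, and statement (a) gives that $t\mapsto t\,\psi_t(\eta)$ is locally integrable on $\bbr_+$ for almost all $\eta$. This is exactly the hypothesis under which $\Cal D_{+,2}^{(k'-k)/2}$, computed by (\ref{frr+z}), is a genuine left inverse of $I_{+,2}^{(k'-k)/2}$ (cf. Lemma \ref{lifa2}(i) and the comment after (\ref{frr+z})). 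Applying it in the $r$-variable recovers $\psi_t(\eta)=(\Cal D_{+,2}^{(k'-k)/2}[\Psi_{(\cdot)}(\eta)])(t)$ for almost all $t$, hence $\vp_0(\eta,t)=t^{k'+2-n}(\Cal D_{+,2}^{(k'-k)/2}[\Psi_{(\cdot)}(\eta)])(t)$; carrying through the factor $c^{-1}$ absorbed into $\Psi_r$ produces the first displayed formula.

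The main obstacle is not any single computation — each operator is inverted by a result already available — but the bookkeeping of the almost-everywhere quantifiers across the two inversions. The Funk-Radon inversion is performed for almost every fixed $r$ and yields $\Psi_r(\eta)$ for almost every $\eta$, whereas the Erd\'elyi-Kober inversion is performed for almost every fixed $\eta$ in the $r$-variable. To legitimately compose them I would first verify that the recovered $\Psi_r(\eta)$ is jointly measurable on $\gnkp\times\bbr_+$ and agrees a.e. with the $\Psi$ of (\ref{am3xd}), and then invoke Fubini's theorem on $\gnkp\times\bbr_+$ to pass from the two separate full-measure statements to a single full-measure set of pairs $(\eta,t)$ on which the final formula holds. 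The integrability hypotheses (a), (b) together with (\ref{lab64x1}) are precisely what make this Fubini argument and both operator inversions simultaneously valid.
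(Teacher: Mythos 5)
Your proposal is correct and takes essentially the same route as the paper: the paper's proof of this theorem is exactly the preceding factorization (\ref{am3})--(\ref{am3xd}) justified through statements (a) and (b), followed by inversion of $\F_{(n)}^*$ via (\ref{bm1}) and Theorem \ref{ifu} (whose injectivity criterion applied to the pair $G_{n,n-k'}$, $G_{n,n-k}$ gives precisely $k+k'\ge n$) and inversion of $I_{+,2}^{(k'-k)/2}$ via (\ref{frr+z}) and Lemma \ref{lifa2}(i), with the same placement of the constant $c$. Your extra attention to joint measurability and the Fubini bookkeeping of the almost-everywhere sets is a refinement the paper leaves implicit, not a deviation from its argument.
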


\begin{remark} It is known that both  $R$ and $R^*$ take radial functions to radial ones on the corresponding affine Grassmannians \cite{Ru04a}. For quasi-radial functions the situation is different. In Subsection \ref{sfo} it was shown that the operator $R$ takes quasi-radial functions on $\agnk$ to quasi-radial functions on $\agnkp$. A similar result for  $R^*$ may not be true. Let, for example, $k=1$ and $k'=n-1$. Then  (\ref {lab6})
 with $\xi=\bbr^1$ and $ u=e_m$, $m>1$, yields
 \bea (R^*\vp)(\bbr^1, e_m)
&=&\intl_{SO(n-1)} \vp(\rho e_n^\perp+\Pr_{\rho e_n} e_m) \, d\rho\nonumber\\
&=&\intl_{SO(n-1)} \vp((\rho e_n)^\perp +(\rho e_n)(\rho e_n \cdot e_m)) \, d\rho\nonumber\\
&=&\intl_{S^{n-2}} \vp(\sig^\perp +\sig(\sig\cdot e_m)) \, d_*\sig, \nonumber\eea
where $S^{n-2}$ stands for the unit sphere in $e_1^\perp$. Now we choose $\vp=\vp_0$, where $\vp_0 (\eta, v)= |v\cdot e_2|$. The orthogonal complement to $\eta$ is one-dimensional and therefore, the only orthogonal transformation that keeps $\eta$ fixed is the reflection $v \to -v$.  It follows that $\vp_0$ is quasi-radial. Assuming, for simplicity, $n=3$, we have the following  expressions for $m=2$ and $m=3$:
\[(R^*\vp_0)(\bbr^1, e_2)=\intl_{S^{1}} (\sig\cdot e_2)^2 \, d_*\sig=\frac{1}{2\pi}\intl_0^{2\pi} \cos^2 \th \, d\th=\frac{1}{2},\]
\[(R^*\vp_0)(\bbr^1, e_3)=\intl_{S^{1}} |(\sig\cdot e_2)(\sig\cdot e_3)| \, d_*\sig=\frac{1}{2\pi}\intl_0^{2\pi} |\cos \th \, \sin \th|\, d\th=\frac{1}{\pi}.\]
Thus $(R^*\vp_0)(\bbr^1, e_2)\neq (R^*\vp_0)(\bbr^1, e_3)$, which means that $R^*\vp_0$ is not quasi-radial.
\end{remark}

\subsection{The Dual Radon Transform. The general case}

By Lemma \ref{L4},
the dual Radon transform $f=R^*\vp$ that takes  functions $\vp \in L^1_{k+1}(\agnkp)$ to functions  on $\agnk$ is injective if and only
if $ k+k' \ge n-1$. To reconstruct $\vp$ from $f$, we use some ideas from \cite[Section 4]{Ru04a} according to which  $  R^*$  expresses through a certain auxiliary Radon transform $\fr R$  that takes  functions on  $G(n, n-k'-1)$  to functions  on $G(n, n-k-1)$.  The new transform $\fr R$ can be explicitly inverted, e.g.,  as in  Section 3. Thus we shall arrive at explicit inversion of $R^*$.

Let us proceed to details.
For $\t\equiv\t(\xi, u) \in \agnk$, with $ u \neq 0$, we denote by $\{\t\}  \in G_{n,
k+1}$ the smallest linear subspace containing $\t$, and set
\[ \tilde\xi=\{\t\}^{\perp} \in G_{n, n-k-1}, \quad \tilde u= -\frac{u}{|u|^2} \in \tilde\xi^\perp, \quad
\tilde \t\equiv \tilde \t(\tilde\xi,\tilde u)\in G (n, n-k-1). \]
Consider the Kelvin-type mapping
\be \agnk \ni \t \xrightarrow {\quad \nu \quad } \tilde \t \in G(n, n-k-1).\ee
Clearly, $\nu (\nu (\t))=\t$ and $|\t|=|\tilde \t|^{-1}$ (see Notation). In a similar way, for $\z\equiv\z(\eta, v)\in \agnkp$,  $v\neq 0$, we denote
\[ \tilde\eta=\{\z\}^{\perp} \in G_{n, n-k'-1}, \quad \tilde v= -\frac{v}{|v|^2} \in \tilde\eta^\perp, \quad
\tilde \z\equiv \tilde \z(\tilde\eta,\tilde v)\in G (n, n-k'-1), \]
so that
\be \agnkp \ni \z \xrightarrow {\quad \nu \quad } \tilde \z \in G(n, n-k'-1).\ee

\begin{definition} Let  $R: f(\t) \to (Rf)(\z)$ be the Radon
transform (\ref{lab2}) that takes functions on $\agnk$ to functions on $\agnkp, \; k'>k$. If $\tilde \t =\nu (\t)\in G(n, n-k-1)$ and $\tilde \z =\nu (\z)\in G(n, n-k'-1)$, then the associated  Radon
transform $ \tilde f(\tilde\z) \to (\fr R \tilde f)(\tilde \t)$ that integrates $\tilde f $ over all   $\tilde\z$ in $\tilde\t$
 is called {\it
quasi-orthogonal} to $R$.
\end{definition}

\begin{theorem}\label{exath}   \cite[Theorem 5.5]{Ru04a}  Let $0\le k<k' <n$. For a function $\vp$   on $ \agnkp$, we denote
\be\label {hat} (A\vp)(\tilde \z)=|\tilde \z|^{k-n}\vp
(\nu^{-1}(\tilde \z)), \qquad \tilde \z \in G(n, n-k'-1). \ee

{\rm (i)} The following relation holds
\be\label {ild} \intl_{\agnkp}\frac{\vp (\z)
\; d\z}{(1+|\z|^2)^{(k
  +1)/2}}= \frac{\sig_{n-k' -1}}{\sig_{k'}}\intl_{G(n, n-k'-1)}\frac{(A\vp)(\tilde \z)
\; d\tilde \z}{(1+|\tilde \z|^2)^{(k
  +1)/2}} \ee
provided that either side of this equality exists in the Lebesgue sense.

{\rm (ii)} If at least one of the integrals in (\ref{ild}) is finite, then
 \be \label {o9i} (R^* \vp)(\t)=c \, |\t|^{k'
-n}(\fr R A\vp)(\nu (\t)), \qquad c=\frac{\sig_{n-k'
-1}}{\sig_{n-k -1}}. \ee
\end{theorem}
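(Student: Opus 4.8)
The plan is to treat the Kelvin-type map $\nu$ as an involutive change of variables between $\agnkp$ and $G(n,n-k'-1)$, to read part (i) as the global Jacobian identity for $\nu$, and to obtain part (ii) by restricting the same change of variables to the incidence fibres $\{\z:\z\supset\t\}$. I parametrize $\z=\z(\eta,v)$ with $\eta\in\gnkp$ and $v\in\eta^\perp\setminus\{0\}$ (a null set is dropped), so that $d\z=d\eta\,dv$, and I pass to the flag coordinates $(\{\z\},\om,t)$, where $\{\z\}=\eta\oplus\bbr v\in G_{n,k'+1}$, $\om=v/|v|\in S^{n-1}\cap\{\z\}$, $t=|v|$, and $\eta=\{\z\}\cap\om^\perp$. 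In these coordinates $\nu$ acts by $t\mapsto 1/t$, $\om\mapsto-\om$, $\{\z\}\mapsto\{\z\}^\perp\oplus\bbr\om$, whence $\nu(\nu(\z))=\z$ and $|\z|=|\tilde\z|^{-1}$ are immediate.

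For part (i) I would compute the Jacobian $d\z/d\tilde\z$ of $\nu$. Writing $dv=t^{n-k'-1}\,dt\,d\om$, the radial substitution $t\mapsto 1/t$ supplies the classical Kelvin factor, while the remaining factor comes from relating the invariant measure $d\eta$ on $\gnkp$ to $d\tilde\eta$ on $G_{n,n-k'-1}$ through the shared flag $(\{\z\},\om)$. Carrying this out should give $d\z=(\sig_{n-k'-1}/\sig_{k'})\,|\z|^{n+1}\,d\tilde\z$. Granting this density, (\ref{ild}) is bookkeeping: from $|\z|=|\tilde\z|^{-1}$ one has $(1+|\z|^2)^{-(k+1)/2}=|\tilde\z|^{k+1}(1+|\tilde\z|^2)^{-(k+1)/2}$, and after substitution the product $|\tilde\z|^{k+1}|\z|^{n+1}=|\tilde\z|^{k-n}$ is absorbed into the definition (\ref{hat}) of $A\vp$, producing both the weight and the constant $\sig_{n-k'-1}/\sig_{k'}$. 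The hypothesis that one side converges in the Lebesgue sense is exactly what licenses the substitution applied to $|\vp|$.

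For part (ii) the geometric core is the incidence reversal $\z\supset\t\iff\nu(\z)\subset\nu(\t)$. I would verify it directly: $\t\subset\z$ forces $\xi\subset\eta$ and $v=\Pr_{\eta^\perp}u$, hence $\tilde\eta=\{\z\}^\perp\subset\{\t\}^\perp=\tilde\xi$, and the affine compatibility $\tilde v-\tilde u\in\tilde\xi$ reduces to the scalar relation $u\cdot v=|v|^2$, which is precisely $v=\Pr_{\eta^\perp}u$. Thus $\nu$ carries the fibre $\{\z\supset\t\}$ onto $\{\tilde\z\subset\tilde\t\}$, and the integral (\ref{lab5}) defining $(R^*\vp)(\t)$ is transported to an integral of $A\vp$ over the latter fibre, i.e. to $(\fr R A\vp)(\nu(\t))$. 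It then remains to pin down the conditional Jacobian $d_\t\z/d_{\tilde\t}\tilde\z$; disintegrating the global identity of part (i) along the incidence fibration (with the normalization recorded in (\ref{lab8}) fixing constants) should yield the power $|\t|^{k'-n}$ and the constant $c=\sig_{n-k'-1}/\sig_{n-k-1}$, giving (\ref{o9i}). Alternatively, (ii) can be reached by pairing the claimed identity against a test function $f$ on $\agnk$ and using the duality (\ref{lab7}) to trade $\int f\,R^*\vp$ for $\int Rf\,\vp$, at the cost of computing the transformation law of $R$ under $\nu$.

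The main obstacle in either route is the Grassmannian part of the Jacobian of $\nu$: unlike a plain radial inversion on $\rn$, the assignment $\eta\mapsto\tilde\eta$ is coupled to the direction $\om$, so relating $d\eta$ to $d\tilde\eta$ demands a careful disintegration of the invariant measures through the flag $(\{\z\},\om)$ and control of the resulting density. Verifying that this density combines with the radial Kelvin factor into the clean power $|\z|^{n+1}$ and the stated ratio of sphere areas is the technical heart; once part (i) is secured, the incidence reversal renders the passage to part (ii) essentially formal, modulo the routine checks of Lebesgue-sense convergence that justify the interchanges.
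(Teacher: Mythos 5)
First, a point of reference: the paper does not prove Theorem \ref{exath} at all --- it is quoted verbatim from \cite[Theorem 5.5]{Ru04a} --- so your attempt can only be measured against the proof in that reference, which proceeds by directly computing the two Jacobians that you defer. Your sketch of part (i) is sound and the key density is stated correctly: writing $d\z = d\eta\, dv$ and passing to the flag coordinates $(\{\z\},\om,t)$, the invariant measure on the flag manifold $\{(\lam,\om):\lam \in G_{n,k'+1},\ \om \in S^{n-1}\cap \lam\}$ has total mass $\sig_{k'}$ in the fibration over $G_{n,k'+1}$ and total mass $\sig_{n-k'-1}$ in the fibration over $\gnkp$, the orthocomplement map $G_{n,k'+1}\to G_{n,n-k'-1}$ is measure preserving, and the radial substitution $t\mapsto 1/t$ then gives exactly $d\z = (\sig_{n-k'-1}/\sig_{k'})\,|\z|^{n+1}\, d\tilde\z$; combined with your weight bookkeeping this yields (\ref{ild}). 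The incidence reversal $\t\subset\z \iff \nu(\z)\subset\nu(\t)$ is also verified correctly (one direction, plus involutivity of $\nu$).

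The genuine gap is in part (ii), where neither of your proposed routes produces the Jacobian that is the actual content of (\ref{o9i}). The fibres $\{\z : \z\supset\t\}$ are $d\z$-null sets in $\agnkp$, so the global identity (i) cannot simply be ``disintegrated along the incidence fibration'': an identity of measures on the total space does not determine the conditional measures on null fibres, and (\ref{lab8}) can fix at most an overall constant, not the functional form of the density. What must actually be proved is that the fibre bijection $\eta\mapsto \nu(\eta+u)$, from the compact set $\{\eta\in\gnkp : \eta\supset\xi\}$ with its probability measure $d_\xi\eta$ onto the noncompact fibre $\{\tilde\z : \tilde\z\subset\nu(\t)\}$ with its product measure $d_{\tilde\t}\tilde\z$, has pushforward density precisely $c\,|\t|^{k'-n}|\tilde\z|^{k-n}$ with respect to $d_{\tilde\t}\tilde\z$. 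This density is not constant along the fibre (it carries the factor $|\tilde\z|^{k-n}$ that gets absorbed into $A\vp$), so no soft equivariance or homogeneity argument can produce it; it requires the explicit computation carried out in \cite{Ru04a}. Your alternative route via the duality (\ref{lab7}) is circular as stated: to trade $\int f\, R^*\vp$ for $\int (Rf)\,\vp$ you need the transformation law of $R$ itself under $\nu$, which is a twin statement of exactly the same depth as (ii) (it is the companion result in \cite{Ru04a}, proved there by the same kind of direct computation), so it cannot be taken as given.
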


Theorem \ref{exath} paves the way to reconstruction of $\vp$ from $f=R^* \vp$.  Indeed, we write (\ref{o9i}) as
$(\fr R A\vp)(\nu (\t))=c^{-1} |\t|^{n-k'}(R^* \vp)(\t)$ or
\be
(\fr R A\vp)(\tilde \t))=c^{-1} |\tilde \t|^{k'-n}\,f(\nu^{-1}(\tilde\t)).\ee
Inverting $\fr R$ as in Section 3, we formally obtain
\be\label {mku}
\vp (\z)\!=\!|\z|^{k-n} (\fr R^{-1} f_1)(\nu (\z)), \quad f_1 (\tilde \t)\!=\!c^{-1} |\tilde \t|^{k'-n}\,f(\nu^{-1}(\tilde\t)).\ee

To make this reasoning rigorous, we need to choose  a suitable class of  functions $\vp$ that guarantees applicability of (\ref{o9i}) and
  Theorem  \ref{mt3}.

For $1\le p<\infty$, we denote
\be\label {mmat1} \tilde L^p (\agnkp) =\left \{ \vp:\;\intl_{\agnkp} |\z|^{(n-k)p-n-1} |\vp (\z)|^p \, d\z<\infty \right \}.\ee
For $\mu \in \bbr$, let  $\tilde C_\mu (\agnkp)$ be the space of all functions $\vp$ which are
continuous  on the set of all $k'$-planes $\z \subset \rn$ not passing through the origin and satisfy the following condition:
\be\label{dit}
\left\{ \!
 \begin{array} {ll} |\z|^{n-k-\mu}\vp(\z)=O(1) & \mbox{\rm if  $|\z|\to 0$,}\\
 ${}$\\
  |\z|^{n-k}\vp(\z) \to \const   & \mbox{\rm if  $|\z|\to \infty$.}\\
   \end{array}
\right.\ee

The  choice of these classes of functions is motivated by the following lemma.

\begin{lemma}\label{atu}${}$ \hfill

{\rm (i)} For any  $1\le p<\infty$, the relations $\vp \in \tilde L^p (\agnkp)$ and  $A\vp \in L^p(G(n, n-k'-1))$  are equivalent.
 If $1\le p<(k'+1)/(k' -k)$, then the Radon transform $\fr R A\vp$ exists in the Lebesgue sense and
\be\label {kin}
\intl_{\agnkp}\frac{|\vp (\z)|
\; d\z}{(1+|\z|^2)^{(k
  +1)/2}} <\infty.\ee

{\rm (ii)}  For any   $\mu \in \bbr$, the relations $\vp \in \tilde C_\mu (\agnkp)$ and  $A\vp \in  C_\mu (G(n, n-k'-1))$  are equivalent. If $\mu> k' -k$, then the Radon transform $(\fr R A\vp)(\tilde \t)$ is finite for every $\tilde \t \in G(n, n-k-1)$ and (\ref {kin}) holds.
\end{lemma}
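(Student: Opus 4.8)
The plan is to deduce both parts from the change-of-variables formula (\ref{ild}) of Theorem \ref{exath}(i), combined with the two elementary relations $|\nu^{-1}(\tilde \z)|=|\tilde \z|^{-1}$ and, writing $\z=\nu^{-1}(\tilde \z)$,
\[(A\vp)(\tilde \z)=|\tilde \z|^{k-n}\vp(\z)=|\z|^{n-k}\vp(\z),\]
the last equality because $|\tilde \z|^{k-n}=|\z|^{n-k}$. For the norm equivalence in (i) the trick is to feed (\ref{ild}) not the function $\vp$ but the auxiliary nonnegative function
\[\Phi(\z)=|\z|^{(n-k)p-n-1}\,(1+|\z|^2)^{(k+1)/2}\,|\vp(\z)|^p.\]
A direct exponent computation, using $|\z|=|\tilde \z|^{-1}$ and $(1+|\tilde \z|^{-2})^{(k+1)/2}=|\tilde \z|^{-(k+1)}(1+|\tilde \z|^2)^{(k+1)/2}$, shows that
\[\frac{\Phi(\z)}{(1+|\z|^2)^{(k+1)/2}}=|\z|^{(n-k)p-n-1}|\vp(\z)|^p,\qquad \frac{(A\Phi)(\tilde \z)}{(1+|\tilde \z|^2)^{(k+1)/2}}=|(A\vp)(\tilde \z)|^p.\]
Since $\Phi\ge 0$, formula (\ref{ild}) then yields the identity $\|\vp\|_{\tilde L^p}^{\,p}=(\sig_{n-k'-1}/\sig_{k'})\,\|A\vp\|_{L^p(G(n,n-k'-1))}^{\,p}$ in $[0,\infty]$, whence the equivalence $\vp\in\tilde L^p(\agnkp)\Leftrightarrow A\vp\in L^p(G(n,n-k'-1))$ is immediate.

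For the remaining claims in (i), given $A\vp\in L^p(G(n,n-k'-1))$ with $1\le p<(k'+1)/(k'-k)$, I would apply Lemma \ref{L2}(i) to the Radon transform $\fr R$, which maps $G(n,n-k'-1)$ to $G(n,n-k-1)$: the substitution $k\mapsto n-k'-1$, $k'\mapsto n-k-1$ turns the sharp bound $(n-k)/(k'-k)$ into exactly $(k'+1)/(k'-k)$, so $\fr R|A\vp|$ is finite almost everywhere, i.e. $\fr R A\vp$ exists in the Lebesgue sense. For (\ref{kin}) I would apply (\ref{ild}) to $|\vp|$ (noting $A|\vp|=|A\vp|$) and estimate $\intl_{G(n,n-k'-1)}(1+|\tilde \z|^2)^{-(k+1)/2}|A\vp|\,d\tilde \z$ by H\"older's inequality; the only point is convergence of $\intl(1+|\tilde \z|^2)^{-(k+1)p'/2}d\tilde \z$, and splitting $d\tilde \z=d\tilde\eta\,d\tilde v$ with $\tilde v$ over the $(k'+1)$-dimensional orthogonal complement reduces this to $\intl_{\bbr^{k'+1}}(1+|\tilde v|^2)^{-(k+1)p'/2}d\tilde v<\infty$, which holds precisely when $(k+1)p'>k'+1$, i.e. $p<(k'+1)/(k'-k)$ (the case $p=1$ being trivial, the weight being bounded).

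For part (ii) I would use $(A\vp)(\tilde \z)=|\z|^{n-k}\vp(\z)$ with $|\z|=|\tilde \z|^{-1}$ to translate the two conditions of (\ref{dit}). As $|\tilde \z|\to\infty$ we have $|\z|\to 0$, and $|\z|^{n-k-\mu}\vp(\z)=O(1)$ becomes $(A\vp)(\tilde \z)=O(|\z|^{\mu})=O(|\tilde \z|^{-\mu})$, i.e. the $C_\mu$-decay of $A\vp$; as $|\tilde \z|\to 0$ we have $|\z|\to\infty$, and $|\z|^{n-k}\vp(\z)\to\const$ becomes a finite limit of $A\vp$ at the planes through the origin. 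Together with continuity of $\vp$ on planes off the origin and the fact that $\nu$ is a homeomorphism there, this gives $\vp\in\tilde C_\mu(\agnkp)\Leftrightarrow A\vp\in C_\mu(G(n,n-k'-1))$. Finiteness of $(\fr R A\vp)(\tilde \t)$ for every $\tilde \t$ then follows from Lemma \ref{L2}(i) (continuous case), whose hypothesis $\mu>k'-k$ is exactly the shifted form of $\mu>(n-k-1)-(n-k'-1)$; and (\ref{kin}) follows by applying (\ref{ild}) to $|\vp|$ and noting that the decay $|A\vp(\tilde \z)|=O(|\tilde \z|^{-\mu})$ against the weight $\sim|\tilde \z|^{-(k+1)}$ produces an integrand $\sim|\tilde \z|^{-\mu-(k+1)}$, integrable over $\bbr^{k'+1}$ exactly when $\mu>k'-k$.

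The main obstacle will be the careful bookkeeping of exponents in part (i): choosing $\Phi$ so that (\ref{ild}) collapses the weighted $\tilde L^p$-norm of $\vp$ into the plain $L^p$-norm of $A\vp$, and checking that the H\"older weight integral converges on precisely the stated range. In part (ii) the delicate point is verifying that the limit condition as $|\z|\to\infty$ genuinely supplies \emph{continuity} of $A\vp$ at the (topologically essential) planes through the origin, so that $A\vp$ is an element of $C_\mu$ on the whole affine Grassmannian rather than merely off the origin.
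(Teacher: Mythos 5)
Your proposal is correct and takes essentially the same route as the paper's proof: you use the identical auxiliary function $\Phi(\z)=(1+|\z|^2)^{(k+1)/2}|\z|^{(n-k)p-n-1}|\vp(\z)|^p$ fed into (\ref{ild}) to get the norm identity, Lemma \ref{L2}(i) with the shifted indices $k\mapsto n-k'-1$, $k'\mapsto n-k-1$ for the existence of $\fr R A\vp$, and H\"older's inequality for (\ref{kin}). The only difference is that you fill in details (the exponent bookkeeping, the convergence range $(k+1)p'>k'+1$, and the continuity of $A\vp$ at planes through the origin in part (ii)) that the paper leaves as ``straightforward.''
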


\begin{proof}  (i)  To prove the first statement, we observe that
\[ ||A\vp||_p^p =\!\!\intl_{G(n, n-k'-1)}\!\! \! |\tilde \z|^{(k-n)p}\, |\vp(\nu^{-1}(\tilde \z))|^p\, d\tilde \z =\!\!\intl_{G(n, n-k'-1)}\!\frac{(A\psi)(\tilde \z)
\; d\tilde \z}{(1+|\tilde \z|^2)^{(k+1)/2}}, \]
where
\[ \psi(\z)= (1+|\z|^2)^{(k+1)/2}|\z|^{(n-k)p-n-1}\,  |\vp(\z)|^p.\]
Hence, by (\ref{ild}),
\bea
\frac{\sig_{n-k' -1}}{\sig_{k'}} \,||A\vp||_p^p &=& \intl_{\agnkp}\frac{\psi (\z)
\; d\z}{(1+|\z|^2)^{(k  +1)/2}}\nonumber\\
&=&\intl_{\agnkp} |\z|^{(n-k)p-n-1} |\vp (\z)|^p \, d\z,\nonumber\eea
as desired.  The existence of $\fr R A\vp$  follows from Lemma \ref{L2}; (\ref {kin}) holds by H\"older's inequality.

(ii)  The proof of the first statement and  the finiteness of the right-hand side of (\ref{ild}) for $\mu> k' -k$ is straightforward. Hence the left-hand side of (\ref{ild}) is finite. The existence of $\fr R A\vp$  follows from Lemma \ref{L2}.
\end{proof}

Now we are ready to formulate the main inversion result for $R^*$ that follows from (\ref{o9i}) and Theorem  \ref{mt3}. Note that application of this theorem leads to some additional restrictions on the classes of functions in comparison with those in Lemma \ref{atu}.

\begin{theorem}\label{dr}
Let
\[1\le k<k'\le n-1, \quad k+k'\ge n-1, \quad n-k'-1 \le \vk< k+1,\] and suppose that $f=R^* \vp$.
 If
 \[\vp \in \tilde L^p (\agnkp), \qquad  1\le p<\frac{k'+1}{k' -k+\vk}, \]
 or \[ \vp \in \tilde C_\mu (\agnkp),  \qquad \mu> k' -k+\vk, \] then
 $\vp$ can be reconstructed from $f$ by the formula (\ref{mku}) in which  $\fr R^{-1}$ defined in accordance with Theorem \ref{mt3}.
\end{theorem}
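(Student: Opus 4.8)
The plan is to reduce the inversion of the dual transform $R^*$ to the inversion of the quasi-orthogonal Radon transform $\fr R$, which is itself a Radon transform between a pair of affine Grassmannians of exactly the type already treated in Section 3. The bridge is the Kelvin-type relation (\ref{o9i}) of Theorem \ref{exath}, combined with the equivalences of function classes recorded in Lemma \ref{atu}. The formal reconstruction is already displayed in (\ref{mku}); the task is to justify each of its ingredients rigorously under the stated hypotheses.

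First I would check that the hypotheses of Theorem \ref{exath}(ii) hold, so that (\ref{o9i}) is legitimate. By Lemma \ref{atu}, the assumption $\vp \in \tilde L^p(\agnkp)$ (resp. $\vp \in \tilde C_\mu(\agnkp)$) is equivalent to $A\vp \in L^p(G(n, n-k'-1))$ (resp. $A\vp \in C_\mu(G(n, n-k'-1))$), and under the stated bounds on $p$ and $\mu$ the transform $\fr R A\vp$ exists in the Lebesgue sense while the integral (\ref{kin}) is finite. Since (\ref{kin}) is precisely the finiteness of the left-hand side of (\ref{ild}) with $\vp$ replaced by $|\vp|$, Theorem \ref{exath}(ii) applies and (\ref{o9i}) holds. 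Using $|\t|=|\tilde\t|^{-1}$ and rewriting (\ref{o9i}) then gives
\[(\fr R A\vp)(\tilde\t) = c^{-1}|\tilde\t|^{k'-n}f(\nu^{-1}(\tilde\t)) = f_1(\tilde\t), \qquad f=R^*\vp,\]
which is exactly the function $f_1$ appearing in (\ref{mku}).

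Next I would invert $\fr R$ by Theorem \ref{mt3}. Since $\fr R$ maps functions on $G(n, n-k'-1)$ to functions on $G(n, n-k-1)$, I set $k_0=n-k'-1$ and $k_0'=n-k-1$, so that $k_0<k_0'$ and $k_0'-k_0=k'-k$. The injectivity condition $k_0+k_0'\le n-1$ of Lemma \ref{L4}(i) is equivalent to the hypothesis $k+k'\ge n-1$. In these variables the admissible range $k_0\le\vk<n-k_0'$ for the auxiliary integer becomes $n-k'-1\le\vk<k+1$, exactly as assumed; moreover $n-k_0=k'+1$, so the exponent range $1\le p<(n-k_0)/(k_0'-k_0+\vk)$ becomes $1\le p<(k'+1)/(k'-k+\vk)$ and the continuity threshold $\mu>k_0'-k_0+\vk$ becomes $\mu>k'-k+\vk$. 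Since $A\vp$ lies in the matching space by Lemma \ref{atu}, Theorem \ref{mt3} recovers $A\vp=\fr R^{-1}f_1$, with $L^p$-norm convergence (resp. uniform convergence). Undoing the definition (\ref{hat}) of $A$, namely $\vp(\z)=|\z|^{k-n}(A\vp)(\nu(\z))$, yields the inversion formula (\ref{mku}).

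The main obstacle is bookkeeping rather than any single deep estimate: one must track three changes of variable at once — the Kelvin map $\nu$, the weight $A$, and the relabelling $(k,k')\mapsto(n-k'-1,\,n-k-1)$ — and verify that every numerical condition (injectivity, the $L^p$-exponent range, the continuity exponent, and the admissible range of $\vk$) transports consistently, as computed above. A second point requiring care is that the mode of convergence in Theorem \ref{mt3} must survive the pull-back through $\nu$ and the multiplication by $|\z|^{k-n}$; this is guaranteed because the identity in the proof of Lemma \ref{atu}(i) shows that $A$ is, up to a constant factor, an isometry from $\tilde L^p(\agnkp)$ onto $L^p(G(n,n-k'-1))$, so $L^p$-convergence of the reconstructed $A\vp$ is equivalent to $\tilde L^p$-convergence of $\vp$. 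Once the class equivalences of Lemma \ref{atu} and these parameter identities are in place, the reconstruction follows immediately.
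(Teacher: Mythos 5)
Your proposal is correct and follows the paper's own route exactly: reduce $R^*$ to the quasi-orthogonal transform $\fr R$ via (\ref{o9i}), use Lemma \ref{atu} to transport the function classes through $A$, and invert $\fr R$ by Theorem \ref{mt3} with the relabelling $(k,k')\mapsto(n-k'-1,\,n-k-1)$, under which the injectivity, exponent, and $\vk$ conditions transform precisely into the stated hypotheses. Your parameter bookkeeping matches the paper's (which leaves it implicit), so nothing further is needed.
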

\begin{remark} The   case $\vk=n-k'-1$ in  Theorem \ref{dr} might be of particular interest. In this case
Theorem \ref{dr} holds under the assumptions \[1\le p<\frac{k'+1}{n-k-1}, \qquad \mu> n-k-1.\]
\end{remark}
\section{Concluding Remarks}

In the present paper we suggested several straightforward inversion algorithms for the Radon transform and its dual on affine Grassmannians $\agnk$ and  $\agnkp$. In particular, the Gonzalez-Kakehi Fourier inversion  method for Schwartz functions \cite{GK03} was extended to more general  $L^p$ and continuous functions and arbitrary parity of $k$ and $k'$. To this end, we have  replaced the Fourier transform by the relevant Radon-John transform. The price for this improvement is an additional parameter $\vk$ that makes the  classes of functions not optimal. The question of how to eliminate this parameter (without using stereographic projection, as in \cite{Ru04a}), remains open and requires new  ideas.

{\bf Acknowledgements.} This work was begun when the second-named author was visiting the Department of Mathematics, Louisiana State University, in 2014-2015.  He would like to express his  gratitude to the administration of this Department for the hospitality.


\end{document}